\documentclass{amsart}

\usepackage{amsmath,amscd,amssymb}

\usepackage{hyperref}
\hypersetup{colorlinks,linkcolor={red},citecolor={blue},urlcolor={blue}}

\newtheorem{theorem}{Theorem}[section]

\newtheorem{proposition}[theorem]{Proposition}
\newtheorem{corollary}[theorem]{Corollary}

\theoremstyle{definition}
\newtheorem{definition}[theorem]{Definition}

\newtheorem{remark}[theorem]{Remark}

\numberwithin{equation}{section}

\newcommand{\CC}{\mathbb C}
\newcommand{\HH}{\mathbb H}

\newcommand{\NN}{\mathbb N}
\newcommand{\PP}{\mathbb P}
\newcommand{\QQ}{\mathbb Q}
\newcommand{\RR}{\mathbb R}
\newcommand{\ZZ}{\mathbb Z}

\newcommand{\cD}{\mathcal D}
\newcommand{\cH}{\mathcal H}

\newcommand{\SL}{\mathop{\mathrm {SL}}\nolimits}
\newcommand{\SO}{\mathop{\mathrm {SO}}\nolimits}
\newcommand{\Sp}{\mathop{\mathrm {Sp}}\nolimits}
\newcommand{\Orth}{\mathop{\null\mathrm {O}}\nolimits}

\newcommand{\rank}{\mathop{\mathrm {rank}}\nolimits}

\newcommand{\latt}[1]{{\langle{#1}\rangle}}

\newcommand{\Borch}{\operatorname{Borch}}
\def\div{\operatorname{Div}}
\newcommand{\m}{\operatorname{mod}}
\newcommand{\im}{\operatorname{Im}}

\newcommand{\mult}{\operatorname{mult}}

\newcommand{\sing}{\operatorname{Sing}}
\newcommand{\II}{\operatorname{II}}

\newcommand{\inv}{\operatorname{inv}}
\newcommand{\Sch}{\operatorname{Sch}}

\begin{document}
\title[Antisymmetric paramodular forms of weight 3]{Antisymmetric paramodular forms of weight 3}

\author{Valery Gritsenko}

\address{Laboratoire Paul Painlev\'{e}, Universit\'{e} de Lille and IUF, 59655 Villeneuve d'Ascq Cedex, France and National Research University Higher School of Economics, Russian Federation}

\email{valery.gritsenko@univ-lille.fr}

\author{Haowu Wang}

\address{Laboratoire Paul Painlev\'{e}, Universit\'{e} de Lille, 59655 Villeneuve d'Ascq Cedex, France}

\email{haowu.wangmath@gmail.com}

\subjclass[2010]{11F27, 11F30, 11F46, 11F50, 11F55, 14K25}

\date{\today}

\keywords{antisymmetric paramodular forms, Bocherds product, Jacobi forms, theta block, Weil representation}

\begin{abstract}
The problem on the construction of antisymmetric paramodular forms of 
canonical weight $3$ was open since 1998. Any cusp form of this type 
determines a canonical differential form on any smooth compactification of 
the moduli space of Kummer surfaces associated to $(1,t)$-polarised 
abelian surfaces.
In this paper, we construct the first infinite family of antisymmetric 
paramodular forms of weight $3$  as Borcherds products whose first 
Fourier--Jacobi coefficient is a theta block. 
\end{abstract}

\maketitle

\section{Introduction}
Let $t$ be a positive integer. The paramodular group of level (or polarisation) $t$
is the integral symplectic group of the skew-symmetric form with elementary divisors $(1, t)$.
This group is conjugate to  a subgroup $\Gamma_t$  of the rational symplectic group $\Sp_2(\QQ)$ (see \S \ref{sec:Siegel}). The Siegel modular threefold 
$
{\mathcal A}_t=\Gamma_t\setminus \HH_2
$,
where $\HH_2$ is the Siegel upper half space of genus $2$, 
is isomorphic to the moduli space of abelian surfaces with a polarisation of type $(1,t)$. This moduli space is not compact.
If $F$ is a cusp form of weight $3$ with respect to $\Gamma_t$, 
then $\omega_F=F(Z)dZ$ is a holomorphic $3$-form on ${\mathcal A}_t$.
According to Freitag's criterion (see \S \ref{sec:6}), $\omega_F$ can be extended to any smooth compactification $\overline{{\mathcal A}_t}$ of the moduli space. Therefore  
$$
h^{3,0}({\overline{{\mathcal A}_t}})=\dim_\CC (S_3(\Gamma_t)),
$$
where $S_3(\Gamma_t)$ is the space of paramodular cusp forms of canonical weight $3$.
The lifting construction proposed by Gritsenko in \cite{G94}
provides cusp forms of weight $3$ with respect to the paramodular group
$\Gamma_t$ for all $t$ except $20$ polarisations
$$
t=1,\ldots,12, \ 14,\ 15,\ 16,\ 18,\  20,\ 24, \ 30, \ 36. 
$$
In particular, $H^3(\Gamma_t,\CC)$ is not trivial  for all non exceptional 
polarisations. We note  that
$\dim S_3\left(\Gamma_t\right) =0$ for these twenty~$t$ (see \cite{BPY}). 
Due to the existence of canonical differential forms, the moduli space of 
$(1,t)$-polarised abelian surfaces might have trivial geometric genus only 
for the twenty exceptional polarisations. For $t\leq 20$ 
the rationality or  unirationality  of the moduli space is known 
(see \cite{GP}).

The paramodular group $\Gamma_t$ is not a maximal discrete group acting on $\HH_2$ if $t\neq 1$. It has a normal extension $\Gamma_t^*$ such that
$\Gamma_t^*/\Gamma_t\cong (\ZZ/2\ZZ)^{\nu(t)}$,
where $\nu(t)$ is the number of distinct prime divisors of $t$ (see \cite{GH98}).
In \cite[Theorem 1.5]{GH98} it was proved that the modular variety   
${\mathcal K}_t=\Gamma_t^*\setminus \HH_2$  
can be considered as the moduli space of 
Kummer surfaces associated to $(1,t)$-polarised abelian surfaces.
We note that the birational geometry of  moduli spaces  of Kummer surfaces is much more complicated than the geometry of moduli spaces 
of polarised abelian surfaces because the ramification divisor of the modular variety $\Gamma_t^*\setminus \HH_2$ is much larger (see \cite{GHS2}).
We expect a long list of the moduli spaces ${\mathcal K}_t$ of Kummer surfaces which are rational or unirational since the first cusp $\Gamma_t^*$-form of weight $3$ is known only for $t=167$ (see \cite{GPY2}). 
We note that  the uniruledness of  ${\mathcal K}_t$
for a non-exceptional $t=21$ was 
proved in \cite{GH14}. 

If $t=p$ is a prime, then 
$
\Gamma_t^*=\Gamma_t^+=\Gamma_t\cup \Gamma_tV_t
$
contains only one additional involution $V_t$. 
A $\Gamma_t$-modular form $F$ of  weight $3$ will be modular with respect to the double extension $\Gamma_t^+$ if it  satisfies 
an additional  functional equation (see \S \ref{sec:Siegel} for more details)
\begin{equation}\label{anti1}
F(\left(
\begin{smallmatrix} t\omega &z\\z&\tau/t\end{smallmatrix}
\right))=-F (\left(
\begin{smallmatrix} \tau &z\\z&\omega\end{smallmatrix}
\right)).
\end{equation}
We call such $\Gamma_t$-paramodular forms {\bf antisymmetric}.
We note that the modular form obtained by Gritsenko's lifting 
are symmetric, i.e. they satisfy the equation of type 
\eqref{anti1} with sign plus.

The problem of the construction of antisymmetric paramodular 
forms of weight $3$ was open since 1998. 
For the Siegel modular group $\Gamma_1=\Sp_2(\ZZ)$, there is essentially 
only one 
antisymmetric modular form. This is the Igusa modular form
$\Delta_{35}$  
of odd weight $35$. The Borcherds product expansion for 
$\Delta_{35}$ was proposed in \cite{GN96}. 

The theory of automorphic 
products gives a powerful instrument to construct antisymmetric cusp forms.  
The first six examples of weight $3$ for 
$t=122$, $167$, $173$, $197$, $213$, $285$
were constructed in \cite{GPY2} as automorphic Borcherds products.
This sporadic construction was originally proposed for 
weight $2$ as an answer on a question related to
the Brumer--Kramer conjecture on modularity of abelian surfaces
(see \cite{BK}). 

In this paper we find the first {\bf infinite} series of antisymmetric
paramodular forms of weight $3$ (see Theorem \ref{th:wt3} in \S \ref{sec:Siegel} and \S \ref{sec:wt3}). 
The series  starts with a non-cusp form
for $t=98$. Its first cusp form for $t=122$ coincides with the example 
constructed in \cite{GPY2}. 
As an application (see \S \ref{sec:6}) we prove that   
$H^{3,0}(\Gamma_t^+\setminus\HH_2, \CC)$  or  $H^3(\Gamma_t^+, \CC)$ 
is nontrivial for all square-free $t$ from the  infinite series 
presented in Theorem \ref{th:wt3}. 

The infinite series of antisymmetric paramodular forms is related to a 
very special reflective modular form in $8$ variables on an indefinite orthogonal group $\Orth(2,8)$.
This modular form  $\Phi_3^{\Sch}$ is an automorphic Borcherds product 
(see \S \ref{sec:lift} and \S \ref{sec:wt3}). It 
was discovered  by Nils Scheithauer 
in  \cite[Section 10]{Sch06} 
in the framework of his fundamental program on 
the classification of reflective modular forms of singular weight
(see \cite{Sch06}--\cite{Sch17}). 
The function $\Phi_3^{\Sch}$ is  similar to the  
Borcherds form $\Phi_{12}$ on $\Orth(2,26)$ 
which determines the Fake--Monster 
Lie algebra and plays a crucial role in the Borcherds proof of the 
moonshine conjecture (see \cite{Bo95}--\cite{Bo98}).

The original Scheithauer's construction was given at a zero-dimensional 
cusp of the corresponding  modular variety of orthogonal type
as the Borcherds product of a certain nearly holomorphic modular form with respect 
to the Hecke congruence subgroup $\Gamma_0(7)$. In \S \ref{sec:wt3} we find 
another construction of the Scheithauer modular form at a
one-dimensional cusp in a way proposed in \cite{GN17} and \cite{G18}. 
It turns out that the first Fourier--Jacobi coefficient of the Borcherds product at this cusp 
is a holomorphic Jacobi form which coincides with 
the Kac--Weyl denominator function of the affine Lie algebra 
$\hat{\mathfrak g}(A_6)$. As a corollary we get
that the corresponding Lorentzian Kac--Moody algebra is a hyperbolization
of the affine Lie algebra $\hat{\mathfrak g}(A_6)$
(see \S \ref{sec:6}).
In the last \S \ref{sec:wt4} we consider one more example of this type 
related to the root system $A_4\oplus A_4$ and construct  an infinite 
family of antisymmetric paramodular forms of weight $4$.

\section{Theta blocks and the main theorem}\label{sec:Siegel}
First we recall the definition of Siegel paramodular forms.
Let  
$$
\HH_2=\{Z=\left( \begin{array}{cc}
  \tau & z \\ 
  z & \omega
  \end{array}  \right)\in M(2,\CC): \im Z>0\}
$$
be the Siegel upper half space of genus $2$. The real symplectic group 
$\Sp_2(\RR)$ acts on $\HH_2$ via 
$$ M\latt{Z}=(AZ+B)(CZ+D)^{-1}, \quad M=\left(\begin{array}{cc}
A & B \\ 
C & D
\end{array}  \right) \in \Sp_2(\RR). $$
Let $k\in \ZZ$. We define the slash operator 
on the space of holomorphic functions on $\HH_2$ in the usual way
\begin{equation}
(F\lvert_k M )(Z) =\det(CZ+D)^{-k}F(M\latt{Z}).
\end{equation}
Let $t$ be a positive integer. The \textit{paramodular group} of level $t$ 
is a subgroup of $\Sp_2(\QQ)$ defined as
\begin{equation}
\Gamma_t=\left(\begin{array}{cccc}
  * & t* & * & * \\ 
  * & * & * & */t \\ 
  * & t* & * & * \\ 
  t* & t* & t* & *
  \end{array}   \right) \cap \Sp_2(\QQ),\quad \text{all }\  *\in \ZZ.
\end{equation}

This group is conjugate to the integral symplectic group of the skew-symmetric form with elementary divisors $(1, t)$ (see \cite{GH98, GNII}).
As we mentioned in the introduction, the quotient
$
{\mathcal A}_t=\Gamma_t\setminus \HH_2
$
is isomorphic to the moduli space of abelian surfaces with a polarisation 
of type $(1,t)$.

For $t>1$, we shall use the following double normal extension of $\Gamma_t
$ in $\Sp_2(\RR)$
\begin{equation}\label{Gamma+}
\Gamma_t^+=\Gamma_t\cup \Gamma_t V_t,\qquad V_t=\frac{1}{\sqrt{t}}
\left(\begin{array}{cccc}
0 & t & 0 & 0 \\ 
-1 & 0 & 0 & 0 \\ 
0 & 0 & 0 & 1 \\ 
0 & 0 & -t & 0
\end{array}  \right).
\end{equation}

\noindent
{\bf Definition.}
A holomorphic function $F: \HH_2 \longrightarrow \CC$ is called a 
\textit{Siegel paramodular form} of  weight $k$ and level $t$ if $F
\lvert_k M=F$ for any $M\in \Gamma_t$.
We denote the space of such modular forms by $M_k(\Gamma_t)$. A 
paramodular form $F$ is called a \textit{cusp} form if 
$\Phi(F\lvert_k g)=0$ for all $g\in \Sp_2(\QQ)$, here $\Phi$ is the Siegel 
operator. The space of paramodular cusp forms is denoted by 
$S_k(\Gamma_t)$.
\smallskip

Let $\chi_t: \Gamma_t^+\to \{\pm 1\}$ be the nontrivial character with kernel $\Gamma_t$. By virtue of this character, $M_k(\Gamma_t)$ is decomposed into the direct sum of plus and minus $V_t$-eigenspaces, i.e.
$
M_k(\Gamma_t)=M_k(\Gamma_t^+)\oplus M_k(\Gamma_t^+, \chi_t).
$

For $F\in M_{k}(\Gamma_t^+, \chi_t^\epsilon)$ with $\epsilon=0$
or $1$, we consider its Fourier and Fourier--Jacobi expansions 
\begin{equation}\label{FJ}
F(Z)=\sum_{m\geq 0} \sum_{\substack{n\in \NN, r\in \ZZ\\ 4nmt-r^2\geq 0}}c(n,r,m)q^n\zeta^r \xi^{mt}
=\sum_{m\geq 0}\phi_{mt}(\tau,z)\xi^{mt},
\end{equation}
where $q=\exp(2\pi i \tau)$, $\zeta=\exp(2\pi i z)$, $\xi=\exp(2\pi i \omega)$.
One can prove (see \cite{G94}) that $F$ is cusp form if 
$c(n,r,m)\ne 0$ implies that $4nmt-r^2>0$.

Then we see that each Fourier--Jacobi coefficient is a holomorphic Jacobi form of weight $k$ and index $mt$ in the sense of Eichler--Zagier \cite{EZ}, 
namely $\phi_{mt}\in J_{k,mt}$ (see \S \ref{sec:Jacobi} for more details). 
Moreover, according to the  action of the involution $V_t$, we get the equality 
$$
(-1)^{k+\epsilon}F(\tau,z,\omega)=F(\omega t,z,\tau/t),
$$ 
which yields $c(n,r,m)=(-1)^{k+\epsilon}c(m,r,n)$ (compare with \eqref{anti1}).
When $k+\epsilon$ is even, $F$ is called \textit{symmetric}. 
When $k+\epsilon$ is odd, $F$ is called \textit{antisymmetric}.

The paramodular forms constructed by additive Jacobi lifting due to 
Gritsenko \cite{G94} are always symmetric. Thus the only regular way to 
construct antisymmetric paramodular forms is the method called Borcherds 
automorphic product (see \cite{Bo98}). In the Gritsenko--Nikulin interpretation of 
Borcherds product given in \cite{GNII} one can control the action of the 
involution $V_t$ in terms of  the Fourier coefficients of 
weakly holomorphic Jacobi forms of weight $0$. Unfortunately, one cannot 
produce any infinite series of such weakly holomorphic Jacobi forms because
usually one gets meromorphic automorphic products.
An attempt to overcome this difficulty was made in article \cite{GPY2}, 
using the theory of theta blocks (see \cite{GSZ}). This sporadic method 
gives natural candidates for the first Fourier--Jacobi coefficient of 
an antisymmetric paramodular form of weight $2$ or  $3$. As a result
it was constructed an infinite series of antisymmetric paramodular forms
with weights going to infinity. The first members of the constructed 
series  (see Table 1 in \cite{GPY2}) are of weight $2$ (three examples
for $t=587$, $713$ and $893$) and weight $3$ (six examples 
$t=122$, $167$, $173$, $197$, $213$, $285$ mentioned in the introduction).

In this paper  we construct antisymmetric paramodular forms using
pull-backs of two special antisymmetric orthogonal modular forms of higher 
dimension. Like in \cite{GW, GW18} we use the construction of holomorphic theta blocks in many variables.  

Let 
$$
\eta(\tau)=q^{1/24}\prod_{n\geq 1}(1-q^n)\in S_{1/2}(\SL_2(\ZZ), v_\eta)
$$ 
be the Dedekind $\eta$-function. This is a cusp form of weight $1/2$
with the multiplier system $v_\eta: \SL_2(\ZZ)\to U_{24}$ of order $24$. 
We consider the odd Jacobi theta-series 
\begin{equation}\label{theta}
\vartheta(\tau,z)= q^{\frac{1}{8}}(\zeta^{\frac{1}{2}}-\zeta^{-\frac{1}
{2}}) \prod_{n\geq 1} (1-q^{n}\zeta)(1-q^n \zeta^{-1})(1-q^n).
\end{equation}
It is known that 
$\vartheta(\tau,-z)=-\vartheta(\tau,z)$ and  
$\vartheta(\tau,z)\in J_{1/2, 1/2}(v_\eta^3\times v_H)$ is a holomorphic Jacobi form 
of weight $1/2$ and index $1/2$ (see \cite{GNII}).
We define a theta block
\begin{equation}
\Theta_f=\eta^{f(0)} \prod_{a=1}^{\infty} 
\left( \vartheta_a/ \eta \right)^{f(a)},
\end{equation}
where $f: \NN \to \NN$ is a sequence with finite support and 
$\vartheta_{a}=\vartheta(\tau, az)$.

The quotient $\Theta_f$ is a weak Jacobi form of weight $f(0)/2$ with a 
character or multiplier system. For some function $f$
it is a {\it holomorphic} Jacobi form. 
The simplest example is the theta-quark (see \cite{CG} and \cite{GSZ})
$$
\vartheta_a\vartheta_b\vartheta_{a+b}/\eta\in J_{1, a^2+ab+b^2}(v_\eta^8).
$$

In this paper we prove the following theorem.

\begin{theorem}\label{th:wt3}
For $\mathbf{a}=(a_1,a_2,a_3,a_4,a_5,a_6)\in \ZZ^6$,
the theta block
\begin{equation}\label{TB3}
\begin{split}
\Theta_\mathbf{a}=&\vartheta_{a_1}\vartheta_{a_2}\vartheta_{a_3}\vartheta_{a_4}
\vartheta_{a_5}\vartheta_{a_6}\vartheta_{a_1+a_2}\vartheta_{a_2+a_3}
\vartheta_{a_3+a_4}\vartheta_{a_4+a_5}\vartheta_{a_5+a_6}
\vartheta_{a_1+a_2+a_3}\\
&\vartheta_{a_2+a_3+a_4}\vartheta_{a_3+a_4+a_5}
\vartheta_{a_4+a_5+a_6}\vartheta_{a_1+a_2+a_3+a_4}
\vartheta_{a_2+a_3+a_4+a_5}\\
&
\vartheta_{a_3+a_4+a_5+a_6}\vartheta_{a_1+a_2+a_3+a_4+a_5}
\vartheta_{a_2+a_3+a_4+a_5+a_6}\\
&\vartheta_{a_1+a_2+a_3+a_4+a_5+a_6} /\eta^{15}=q^2(\cdots)  \in  J_{3, N(\mathbf{a})}
\end{split}
\end{equation}
of type $\frac{21-\vartheta}{15-\eta}$ is a holomorphic Jacobi form of weight $3$ and index $N(\mathbf{a})$,
where 
\begin{equation}\label{N(a)}
\begin{split}
N(\mathbf{a})
=&3a_1^2 + 5a_2a_1 + 4a_3a_1 + 3a_4a_1 + 2a_5a_1+a_6a_1+5a_2^2\\
&+ 8a_3a_2+ 6a_4a_2 + 4a_5a_2 + 2a_6a_2 + 6a_3^2 + 9a_4a_3 + 6a_5a_3\\
&+ 3a_6a_3+ 6a_4^2 + 8a_5a_4 + 4a_6a_4 +5a_5^2 + 5a_6a_5 + 3a_6^2.
\end{split}
\end{equation}

If this theta block is not identically zero, there 
exists an antisymmetric holomorphic paramodular form 
$F_{\mathbf{a}}\in M_3(\Gamma_{N(\mathbf{a})}^+)$ of weight $3$ and 
level $N(\mathbf{a})$ whose leading Fourier--Jacobi coefficient is the 
above theta block. Moreover, $F_{\mathbf{a}}$ is a cusp form if 
$N(\mathbf{a})$ is square-free.
\end{theorem}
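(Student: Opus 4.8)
The plan is to split the statement into three parts: (i) the theta block $\Theta_{\mathbf a}$ is a holomorphic Jacobi form of weight $3$ and index $N(\mathbf a)$, (ii) it lifts to an antisymmetric paramodular form of weight $3$ and level $N(\mathbf a)$, and (iii) this paramodular form is a cusp form when $N(\mathbf a)$ is square-free.

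For (i), I would first observe that the twenty-one arguments appearing in $\Theta_{\mathbf a}$ are exactly the positive roots of the root system $A_6$ evaluated on the vector $\mathbf a$: writing $\alpha_i$ for the simple roots of $A_6$, the arguments are $\sum_{i\le k\le j}a_k$ for $1\le i\le j\le 6$, which are the $\binom{7}{2}=21$ positive roots. The quotient $\Theta_{\mathbf a}=\prod_{\alpha>0}\vartheta(\tau,\langle\alpha,\mathbf z\rangle)/\eta^{15}$ is therefore (up to normalisation) a pull-back of the Kac--Weyl denominator of the affine Lie algebra $\hat{\mathfrak g}(A_6)$, as indicated in the introduction; the index $N(\mathbf a)$ is then $\tfrac12\sum_{\alpha>0}\langle\alpha,\mathbf a\rangle^2$, which one checks equals the Gram quadratic form of $A_6$ in the coordinates $\mathbf a$ and matches \eqref{N(a)} by direct expansion. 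Since $\vartheta/\eta$ has a known multiplier and $\vartheta(\tau,z)\in J_{1/2,1/2}(v_\eta^3\times v_H)$, the product is manifestly a weak Jacobi form of weight $21\cdot\tfrac12-15\cdot\tfrac12=3$; the key point is \emph{holomorphicity}, i.e. the $q$-order. The theta-quark $\vartheta_a\vartheta_b\vartheta_{a+b}/\eta$ already gives $q$-order $1$, and here one has an analogous phenomenon for $A_6$: the minimal $q$-power is $q^2$ (the dual Coxeter number minus one, $h^\vee(A_6)-1=7-1=6$ contributes through $\tfrac{21}{8}-\tfrac{15}{24}-\tfrac{3}{2}=2$ after accounting for the $\zeta$-expansion), so the character is $v_\eta^{21-15}=v_\eta^6$, which is trivial on the relevant congruence level, and no negative powers of $q$ appear once one clears the $(\zeta^{1/2}-\zeta^{-1/2})$ factors. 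This is a theta-block computation in the style of \cite{GSZ}, \cite{CG}: the only thing to rule out is that some coefficient with $4nmt-r^2<0$ is nonzero, which follows from the affine Weyl denominator interpretation because that function is supported on $A_6$-weights.

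For (ii), I would invoke the Gritsenko--Nikulin interpretation of the Borcherds product \cite{GNII}, or rather the pull-back construction announced in the introduction: the infinite series comes from restricting the Scheithauer reflective modular form $\Phi_3^{\mathrm{Sch}}$ on $\Orth(2,8)$ (whose first Fourier--Jacobi coefficient is exactly the $\hat{\mathfrak g}(A_6)$ Kac--Weyl denominator, by \S\ref{sec:wt3}) along the embedding determined by $\mathbf a\colon A_1\hookrightarrow A_6$ of the $1$-dimensional lattice of index $N(\mathbf a)$ into the $A_6$-lattice. Concretely: $\Phi_3^{\mathrm{Sch}}$ is an automorphic Borcherds product on a type $\Orth(2,8)$ domain; each primitive vector $\mathbf a\in\ZZ^6$ with $N(\mathbf a)=\tfrac12(\mathbf a,\mathbf a)_{A_6}$ determines a rank-$2$ sublattice $U\oplus\langle 2N(\mathbf a)\rangle$ (or the relevant paramodular lattice $2U\oplus\langle-2t\rangle$ after adding a hyperbolic plane), and the restriction $\Phi_3^{\mathrm{Sch}}|_{U\oplus\langle 2N(\mathbf a)\rangle}$ is a paramodular form of weight $3$ and level $t=N(\mathbf a)$ provided the restriction is not identically zero — which is guaranteed by the hypothesis that $\Theta_{\mathbf a}\not\equiv 0$, since the first Fourier--Jacobi coefficient of the restriction is precisely $\Theta_{\mathbf a}$. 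The antisymmetry under $V_t$ is inherited from a corresponding symmetry of $\Phi_3^{\mathrm{Sch}}$: the Kac--Weyl denominator of $A_6$ is odd under $z\mapsto -z$ in each variable, and this sign propagates through the Fourier--Jacobi expansion to give $c(n,r,m)=(-1)^{k+\epsilon}c(m,r,n)$ with $k+\epsilon$ odd, i.e. \eqref{anti1}. One should check that the divisor of the restricted Borcherds product is $\Orth(2,1)$-type rational quadratic divisors (no poles) so that $F_{\mathbf a}$ is genuinely holomorphic on $\HH_2$; this is where the reflectivity of $\Phi_3^{\mathrm{Sch}}$ — its divisor consisting only of reflective rational quadratic divisors — is essential, so that generic restrictions remain holomorphic.

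For (iii), I would argue that if $F_{\mathbf a}$ were not a cusp form, its Siegel $\Phi$-operator image at some cusp would be a nonzero elliptic modular form of weight $3$; but by \eqref{FJ} non-cuspidality forces a nonzero coefficient $c(n,r,0)$, equivalently the theta block $\Theta_{\mathbf a}$ must fail to be a cusp form, i.e. it has a nonzero term with $4n\cdot N(\mathbf a)-r^2=0$. Such a term corresponds to a singular (boundary) Weyl chamber wall, and whether it occurs is a number-theoretic condition on $N(\mathbf a)$: one shows that $\Theta_{\mathbf a}$ \emph{is} a Jacobi cusp form precisely when $N(\mathbf a)$ is square-free, because the boundary Fourier coefficients of the Kac--Weyl denominator of $\hat{\mathfrak g}(A_6)$ pulled back along a primitive vector survive only when the discriminant form $\ZZ/2N(\mathbf a)$ admits an isotropic element corresponding to a non-square-free obstruction — and this vanishes in the square-free case. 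Equivalently, one can quote that the paramodular form obtained from a pull-back of a Borcherds product is cuspidal iff its leading Fourier--Jacobi coefficient is a Jacobi cusp form and the product has no components of the boundary divisor, and then check that the unique boundary component is rational precisely when $t=N(\mathbf a)$ is not square-free. I expect the main obstacle to be part (i) — pinning down the exact $q$-order and the multiplier system of the twenty-one-fold theta quotient, so as to confirm it is a genuinely \emph{holomorphic} (not merely weak) Jacobi form of the stated weight and index with trivial character; the affine $A_6$-denominator identity is the cleanest route, but verifying that the pull-back along an arbitrary $\mathbf a$ preserves holomorphicity (rather than introducing spurious negative powers) requires the positivity of the quadratic form $N(\mathbf a)$ together with the structure of $A_6^\vee$.
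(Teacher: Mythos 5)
Your parts (i) and (ii) follow essentially the same route as the paper: identify the $21$ arguments with the positive roots of $A_6$, recognize $\Theta_{\mathbf a}$ as the pull-back of the Kac--Weyl denominator of $\hat{\mathfrak g}(A_6)$ (equivalently, of the first Fourier--Jacobi coefficient of $\Phi_3^{\Sch}$), and obtain $F_{\mathbf a}$ as the quasi pull-back of $\Phi_3^{\Sch}$ --- which the paper implements by first restricting the weight-$0$ input $\Psi_{A_6^\vee(7)}$ to $\mathfrak z=z\sum a_iw_i$ and then applying Theorem \ref{th:Borcherds} to the resulting one-variable Jacobi form. Two details in your write-up are off, though both are repairable. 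The multiplier of $\Theta_{\mathbf a}$ is $v_\eta^{3\cdot 21-15}=v_\eta^{48}=\mathrm{id}$ (each $\vartheta$ carries $v_\eta^{3}$, not $v_\eta$), not $v_\eta^{6}$, which would be a nontrivial character of order $4$ and would contradict $\Theta_{\mathbf a}\in J_{3,N(\mathbf a)}$; likewise the $q$-order is $\tfrac{21}{8}-\tfrac{15}{24}=2$ with no extra $-\tfrac32$. More substantively, oddness of the theta block under $\mathfrak z\mapsto-\mathfrak z$ controls the symmetry $c(n,r,m)\mapsto c(n,-r,m)$, not the exchange $n\leftrightarrow m$; the antisymmetry under $V_t$ comes instead from the character value $\chi(V)=(-1)^{D}$ in Theorem \ref{th:Borcherds} with $D=\sum_{n<0}\sigma_0(-n)f(n,0)=f(-1,0)=1$, since the only singular term of $\Psi_{A_6^\vee(7),\mathbf a}$ with $n<0$ is $q^{-1}$.

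The genuine gap is part (iii). The paper's argument is a one-line citation of Proposition \ref{prop:test} (Poor--Shurman--Yuen): for square-free $t$ and odd weight $k$ one has $M_k(\Gamma_t)=S_k(\Gamma_t)$, so holomorphy alone already forces $F_{\mathbf a}$ to be a cusp form. Your proposed substitute --- that $F_{\mathbf a}$ is cuspidal iff $\Theta_{\mathbf a}$ is a Jacobi cusp form, and that the latter holds precisely when $N(\mathbf a)$ is square-free --- does not work in either direction. Cuspidality of a paramodular form must be checked at all $\Gamma_{N(\mathbf a)}$-orbits of cusps, and for non-square-free level these are not all visible in the single Fourier expansion \eqref{FJ}; conversely, for square-free level it is the parity of the weight (via the vanishing of the relevant Eisenstein contributions), not any arithmetic property of the leading theta block, that forces cuspidality. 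The example $t=98=2\cdot 7^2$, where $\Theta_{(1,\dots,1)}$ fails to be a Jacobi cusp form, is consistent with the paper's statement but does not support the equivalence you assert, and the claimed link between square-freeness of $N(\mathbf a)$ and isotropic elements of $\ZZ/2N(\mathbf a)\ZZ$ is left entirely unproved. As written, part (iii) would not go through; replacing it by the appeal to Proposition \ref{prop:test} closes the gap.
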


\section{Jacobi forms of lattice index and Borcherds products}
\label{sec:Jacobi}
In this section, we introduce modular forms on orthogonal groups and Jacobi forms in many variables which will be used in the proof of Theorem \ref{th:wt3} (see \cite{CG} or \cite{G18} for more details). 

We consider an even integral lattice
$M=U\oplus U_1\oplus L(-1)$  of signature $(2,n)$ with $n\geq 3$,
where $U$, $U_1$ are two hyperbolic planes and $L$ is an even positive 
definite integral lattice. We fix a basis of $M$ of the form $(e,e_1,...,f_1,f)$, 
where $U=\ZZ e+\ZZ f$, $U_1=\ZZ e_1+\ZZ f_1$, and $...$ denotes a basis of 
$L(-1)$. Let
\begin{equation*}
\cD(M)=\{[\omega] \in  \PP(M\otimes \CC):  (\omega, \omega)=0, (\omega,
\bar{\omega}) > 0\}^{+}
\end{equation*}
be the associated Hermitian symmetric domain of type IV (here $+$ denotes 
one of its two connected components). Let us denote the index $2$ subgroup 
of the orthogonal group $\Orth(M)$ preserving $\cD(M)$ by $\Orth^+ (M)$.    

Let $\Gamma$ be a finite index subgroup of $\Orth^+ (M)$ and $k\in \ZZ$. A modular form of weight $k$ and character $\chi: \Gamma\to \CC^*$ with respect to $\Gamma$ is a holomorphic function $F: \cD(M)^{\bullet}\to \CC$ on the affine cone $\cD(M)^{\bullet}$ satisfying
\begin{align*}
F(t\mathcal{Z})&=t^{-k}F(\mathcal{Z}), \quad \forall t \in \CC^*,\\
F(g\mathcal{Z})&=\chi(g)F(\mathcal{Z}), \quad \forall g\in \Gamma.
\end{align*}
A modular form is called a cusp form if it vanishes at every cusp (i.e. a boundary component of the Baily-Borel compactification of the modular variety $\Gamma\backslash \cD(M)$). 

Let $D(M)=M^\vee / M$ be the discriminant group of $M$. We denote the 
stable orthogonal group which is the subgroup of $\Orth^+ (M)$ acting 
trivially on $D(M)$ by $\widetilde{\Orth}^+(M)$. For any $v\in M\otimes \QQ
$ satisfying $(v,v)<0$, we define the rational quadratic divisor 
associated to $v$ as
\begin{equation*}
 \cD_v=\{ [Z]\in \cD(M) : (Z,v)=0\}. 
\end{equation*}
A reflective modular form is a modular form on $\cD(M)$ whose zero divisor is a union of rational quadratic divisors associated to primitive vectors determining reflections in $\Orth^+(M)$ (see e.g. \cite{GN17} or \cite{G18} for the exact definition).

We fix a tube realization of the homogenous domain $\cD(M)$ related to 
the 1-dimensional boundary component defined by the isotropic subspace $P=
\latt{e,e_1}$
$$
\cH(L)=\{Z=(\tau,\mathfrak{z},\omega)\in \HH\times (L\otimes\CC)\times 
\HH: (\im Z,\im Z)>0\},
$$
where $(\im Z,\im Z)=2\im \tau \im \omega - (\im \mathfrak{z},\im 
\mathfrak{z})$. In this setting,  a Jacobi form can be viewed as a modular 
form with respect to the Jacobi group $\Gamma^J(L)$ which is a 
distinguished parabolic subgroup $\{ g\in \SO(M)^+: gP=P, g\lvert_L=id\} < 
\Orth^+ (M)$ (see \cite{CG}). The Jacobi group is the semidirect product 
of $\SL_2(\ZZ)$ with the Heisenberg group $H(L)$ of $L$.

\begin{definition}
Let $\varphi : \HH \times (L \otimes \CC) \rightarrow \CC$ be a 
holomorphic function and $k\in\ZZ$, $t\in \NN$. If $\varphi$ satisfies the 
functional equations
\begin{align*}
\varphi \left( \frac{a\tau +b}{c\tau + d},\frac{\mathfrak{z}}{c\tau + d} 
\right)& = (c\tau + d)^k e^{i \pi t \frac{c(\mathfrak{z},\mathfrak{z})}{c 
\tau + d}} \varphi ( \tau, \mathfrak{z} ), \quad \left( \begin{array}{cc}
a & b \\ 
c & d
\end{array} \right)   \in \SL_2(\ZZ),\\
\varphi (\tau, \mathfrak{z}+ x \tau + y)&= e^{-i \pi t ( (x,x)\tau +2(x,
\mathfrak{z}) )} \varphi ( \tau, \mathfrak{z} ), \quad x,y \in L,
\end{align*}
and $\varphi$ has a Fourier expansion as 
\begin{equation*}
\varphi ( \tau, \mathfrak{z} )= \sum_{n\geq n_0 }\sum_{\ell\in L^\vee}f(n,
\ell)q^n\zeta^\ell,
\end{equation*}
where $n_0\in \ZZ$, $q=e^{2\pi i \tau}$ and $\zeta^\ell=e^{2\pi i (\ell,
\mathfrak{z})}$, then $\varphi$ is called a weakly holomorphic Jacobi form 
of weight $k$ and index $t$ associated to $L$. 
If $\varphi$ further satisfies the  condition
$( f(n,\ell) \neq 0 \Longrightarrow 2n - (\ell,\ell) \geq 0 )$
then $\varphi$ is called a holomorphic Jacobi form. If $\varphi$ further 
satisfies the stronger condition
$(f(n,\ell) \neq 0 \Longrightarrow 2n - (\ell,\ell) > 0 )$
then $\varphi$ is called a Jacobi cusp form. 
We denote by $J^{!}_{k,L,t}$ (resp. $J_{k,L,t}$, $J_{k,L,t}^{\text{cusp}}
$) the vector space of weakly holomorphic (resp. holomorphic, cusp) Jacobi 
forms of weight $k$ and index $t$ for $L$.
\end{definition}

The Jacobi forms in the sense of Eichler--Zagier \cite{EZ} are identical 
to the Jacobi forms $J_{k,A_1, t}$ for the lattice  $A_1=\latt{\ZZ, 2x^2}$ 
of rank $1$.

The Fourier coefficient $f(n,\ell)$ depends only on the number $2n-(\ell,
\ell)$ and the class of $\ell$ modulo $tL$.
The number $2n-(\ell,\ell)$ is called the hyperbolic norm of $f(n,\ell)$. 
The Fourier coefficients $f(n,\ell)$ with negative hyperbolic norm are 
called singular Fourier coefficients, which determine the divisor of 
Borcherds product.

\begin{theorem}[see Theorem 4.2 in \cite{G18} for details]
\label{th:Borcherds}
Let 
$$
\varphi(\tau,\mathfrak{z})=\sum_{n\in\ZZ, \ell\in L^\vee}f(n,\ell)q^n 
\zeta^\ell \in J^{!}_{0,L,1},
$$
and assume that $f(n,\ell)\in \ZZ$ for all $2n-(\ell,\ell)\leq 0$. We fix 
an ordering in the vector system $\{\ell;f(0,\ell)\}$ (see the bottom of 
page 825 in \cite{G18}). The notation $(n,\ell,m)>0$ means that either 
$m>0$, or $m=0$ and $n>0$, or $m=n=0$ and $\ell <0$. We set
\begin{align*}
&A=\frac{1}{24}\sum_{\ell\in L^\vee}f(0,\ell),& &\vec{B}=\frac{1}
{2}\sum_{\ell>0} f(0,\ell)\ell,& &C=\frac{1}{2\rank(L)}\sum_{\ell\in L^
\vee}f(0,\ell)(\ell,\ell).&
\end{align*}
Then the product
$$\Borch(\varphi)(Z)=q^A \zeta^{\vec{B}} \xi^C\prod_{\substack{n,m\in\ZZ, \ell
\in L^\vee\\ (n,\ell,m)>0}}(1-q^n \zeta^\ell \xi^m)^{f(nm,\ell)}, $$
where $Z= (\tau,\mathfrak{z}, \omega) \in \cH (L)$, $q=\exp(2\pi i \tau)$, 
$\zeta^\ell=\exp(2\pi i (\ell, \mathfrak{z}))$, $\xi=\exp(2\pi i \omega)$, 
defines a meromorphic modular form of weight $f(0,0)/2$ with respect to 
the stable orthogonal group $\widetilde{\Orth}^+(2U\oplus L(-1))$ with a 
character $\chi$ induced by
\begin{align*}
&\chi \lvert_{\SL_2(\ZZ)}=v_\eta^{24A},& &\chi \lvert_{H(L)}([\lambda,\mu; 
r])=e^{\pi i C((\lambda,\lambda)+(\mu, \mu )- (\lambda, \mu ) +2r)},& & 
\chi(V)=(-1)^D,&
\end{align*}
where $V: (\tau,\mathfrak{z}, \omega) \to (\omega,\mathfrak{z},\tau)$ and 
$D=\sum_{n<0}\sigma_0(-n) f(n,0)$. The poles and zeros of $\Borch(\varphi)
$ lie on the rational quadratic divisors $\cD_v$, where $v\in 2U\oplus L^
\vee(-1)$ is a primitive vector with $(v,v)<0$. The multiplicity of this 
divisor is given by 
$$ \mult \cD_v = \sum_{d\in \ZZ,d>0 } f(d^2n,d\ell),$$
where $n\in\ZZ$, $\ell\in L^\vee$ such that $(v,v)=2n-(\ell,\ell)$ and $v
\equiv \ell\mod 2U\oplus L(-1)$.
Moreover, the first Fourier--Jacobi coefficient of $\Borch(\varphi)$ is 
given by
\begin{equation}\label{FJtheta}
\psi_{L,C}(\tau,\mathfrak{z})=\eta(\tau)^{f(0,0)}\prod_{\ell 
>0}\left(\frac{\vartheta(\tau,(\ell,\mathfrak{z}))}{\eta(\tau)} 
\right)^{f(0,\ell)},
\end{equation}
which is a generalized theta block.
\end{theorem}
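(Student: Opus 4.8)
The plan is to deduce the statement from Borcherds' automorphic product theorem for the lattice $M=2U\oplus L(-1)$, which has signature $(2,n_0+2)$ with $n_0=\rank(L)$ and discriminant form $D(M)\cong D(L(-1))$, by passing through the theta decomposition of Jacobi forms and then translating Borcherds' abstract data (Weyl vector, Heegner divisors, Weil-representation multiplier) into the explicit tube-domain quantities $A$, $\vec B$, $C$, $\chi$ recorded above. First I would write the theta decomposition $\varphi(\tau,\mathfrak z)=\sum_{\mu\in L^\vee/L}h_\mu(\tau)\theta_{L,\mu}(\tau,\mathfrak z)$, where the $\theta_{L,\mu}$ are the index-$1$ theta functions of $L$ (of weight $n_0/2$) and $h=(h_\mu)$ is a weakly holomorphic vector-valued modular form of weight $-n_0/2=1-(n_0+2)/2$ for the Weil representation attached to $D(M)$. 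Writing $h_\mu(\tau)=\sum_D c_\mu(D)q^D$, one has $c_\mu(D)=f(n,\ell)$ whenever $\ell\equiv\mu$ modulo $L$ and $2n-(\ell,\ell)=2D$; in particular $c_0(0)=f(0,0)$, and the hypothesis that $f(n,\ell)\in\ZZ$ for all $2n-(\ell,\ell)\le0$ is precisely the integrality of the principal part of $h$ required to form a Borcherds product.

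Next I would feed $h$ into Borcherds' singular theta lift on $M$; this is the one genuinely deep ingredient. The regularized theta integral of $h$ is $\widetilde{\Orth}^+(M)$-invariant with logarithmic singularities along Heegner divisors, and exponentiating it produces a meromorphic modular form of weight $c_0(0)/2=f(0,0)/2$ whose divisor is $\sum_v\bigl(\sum_{d>0}f(d^2n,d\ell)\bigr)\cD_v$, where $(v,v)=2n-(\ell,\ell)$ and $v\equiv\ell$; this is exactly the stated $\mult\cD_v$. One may simply cite Borcherds' theorem at this point, or, for a self-contained account, reproduce the theta-integral computation of the singularities and of the $\Orth^+(M)$-equivariance.

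It then remains to do the bookkeeping at the one-dimensional cusp $P=\latt{e,e_1}$. In the tube coordinates $Z=(\tau,\mathfrak z,\omega)$ and the Weyl chamber with $\im\omega\to+\infty$, Borcherds' product reads $\exp(2\pi i(Z,\varrho))\prod_{\lambda>0}(1-\exp(2\pi i(Z,\lambda)))^{c_\lambda(\lambda^2/2)}$ over primitive $\lambda$; writing $\lambda=(n,\ell,m)$ turns each factor into $(1-q^n\zeta^\ell\xi^m)^{f(nm,\ell)}$ with the ordering $(n,\ell,m)>0$, because $\lambda^2/2=nm-(\ell,\ell)/2$ forces $c_\lambda(\lambda^2/2)=f(nm,\ell)$. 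Matching the Weyl vector $\varrho$ against the standard formulas yields the prefactor $q^A\zeta^{\vec B}\xi^C$ with $A=\tfrac1{24}\sum_\ell f(0,\ell)$, $\vec B=\tfrac12\sum_{\ell>0}f(0,\ell)\ell$, $C=\tfrac1{2n_0}\sum_\ell f(0,\ell)(\ell,\ell)$, and with it the convergence for $\im\omega\gg0$. The multiplier is read off factor by factor: the $q^A$-behaviour gives $\chi|_{\SL_2(\ZZ)}=v_\eta^{24A}$, the $\zeta^{\vec B}\xi^C$-behaviour gives the Heisenberg character, and tracking the product under the cusp-swapping involution $V\colon(\tau,\mathfrak z,\omega)\mapsto(\omega,\mathfrak z,\tau)$ gives $\chi(V)=(-1)^D$ with $D=\sum_{n<0}\sigma_0(-n)f(n,0)$.

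Finally I would extract the first Fourier--Jacobi coefficient as the limit $\xi\to0$: only the factors with $m=0$, together with the prefactor $q^A\zeta^{\vec B}$, survive, and reorganising them by the product expansions of $\eta$ and $\vartheta$ collapses the $m=0$ part into $\eta(\tau)^{f(0,0)}\prod_{\ell>0}(\vartheta(\tau,(\ell,\mathfrak z))/\eta(\tau))^{f(0,\ell)}$, the asserted generalized theta block. The main obstacle is the appeal to Borcherds' theorem itself, namely the modularity and the Heegner divisor coming from the regularized theta lift; everything after it is an explicit identification of Weyl data and multipliers, of which the $V$-eigenvalue $(-1)^D$ is the most delicate piece of bookkeeping and is precisely what governs the symmetric/antisymmetric dichotomy exploited in the rest of the paper.
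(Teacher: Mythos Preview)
Your outline is a correct and standard derivation of this result from Borcherds' singular theta lift, and it matches how the theorem is actually established in the literature (theta decomposition to pass from $J^!_{0,L,1}$ to vector-valued forms for the Weil representation of $D(M)$, application of Borcherds' product theorem, identification of the Weyl vector and multiplier system in tube coordinates, and extraction of the leading Fourier--Jacobi coefficient via the $m=0$ factors). However, note that the paper itself does \emph{not} give a proof of this theorem: it is stated with the parenthetical ``see Theorem~4.2 in [G18] for details'' and is used as a black box throughout. So there is no paper-internal proof to compare against; your proposal is essentially a sketch of the argument behind the cited reference, and as such it is accurate.
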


From the above theorem, we see that the Borcherds product is antisymmetric 
if the number $D$ is odd.

\section{Lifting scalar-valued modular forms to Jacobi forms}
\label{sec:lift}
In \cite{Sch06}, N. Scheithauer constructed a map which lifts scalar-valued 
modular forms on congruence subgroups to modular forms for the Weil 
representation.  In view of the isomorphism between modular forms for the 
Weil representation and Jacobi forms, we can easily build a lifting  from 
scalar-valued modular forms on congruence subgroups to Jacobi forms. This 
lifting plays a crucial role in this paper. 

For our purpose, we focus on lattices of prime level. Let $L$ be an even 
positive definite lattice with bilinear form $\latt{\cdot,\cdot}$. Denote 
the dual lattice of $L$ by $L^\vee$.
The level of $L$ is the smallest positive integer $N$ such that $N
\latt{x,x}\in 2\ZZ$ for all $x\in L^\vee$. We next assume that the level 
of $L$ is a prime number $p$. 
Let $D(L)=L^\vee/L$ be the discriminant group of $L$.  Let $\{e_\gamma: 
\gamma \in D(L)\}$ be the formal basis of the group ring $\CC[D(L)]$. We 
denote the Weil representation of $\SL_2(\ZZ)$ on $\CC[D(L)]$ by $
\rho_{D(L)}$ and the orthogonal group of $D(L)$ by $\Orth(D(L))$ (see e.g. \cite{Bo98, Sch09}). Let 
$M^{!,\inv}_k(\rho_{D(L)})$ be the space of nearly holomorphic modular 
forms for $\rho_{D(L)}$ of weight $k$ which are holomorphic except at 
infinity and invariant under the action of $\Orth(D(L))$ (see e.g. \cite{Bo98, Sch06}). 
By \cite[Theorem 6.2]{Sch06}, we have the following proposition.

\begin{proposition}\label{propsch}
Let $f\in M_{k}^!(\Gamma_0(p),\chi_{D(L)})$ be a scalar-valued nearly 
holomorphic modular form on $\Gamma_0(p)$ of weight $k$ and character 
$\chi_{D(L)}$ which is holomorphic except at cusps, where $\chi_{D(L)}$ is 
a Dirichlet character  defined as 
$$\chi_{D(L)}(A)=\left(\frac{a}{\lvert D(L)\rvert}\right),\quad A=
\left(\begin{array}{cc}
a & b \\ 
c & d
\end{array}  \right)\in \Gamma_0(p).$$ 
Then we have
\begin{equation}
F_{\Gamma_0(p),f,0}(\tau)= \sum_{M\in \Gamma_0(p) \backslash \SL_2(\ZZ)} f
\lvert_M(\tau) \rho_{D(L)}(M^{-1})e_0 \in M^{!,\inv}_k(\rho_{D(L)}).
\end{equation}
If we write 
$$ 
f\lvert_S (\tau)=\sum_{t=0}^{p-1}g_t(\tau),  \quad S=\left(\begin{array}
{cc}
0 & -1 \\ 
1 & 0
\end{array}  \right),
$$
where 
$$g_t(\tau+1)=\exp\left(\frac{2t\pi i}{p}\right) g_t(\tau), \quad 0\leq t 
\leq p-1,$$
then we have 
\begin{equation}\label{fs1}
F_{\Gamma_0(p),f,0}(\tau)=f(\tau)e_0+\xi_1\frac{p}{\sqrt{\lvert D(L)
\rvert}}\sum_{\gamma\in D(L)} g_{j_\gamma}(\tau) e_\gamma,
\end{equation}
here $j_\gamma/p = -\latt{\gamma,\gamma}/2 \mod 1$ for $\gamma\in D(L)$ 
and 
$$\xi_1=\left(\frac{-1}{\lvert D(L)\rvert}\right)\exp\left( \frac{\rank 
(L)\pi i}{4}\right).$$
\end{proposition}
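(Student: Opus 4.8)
The plan is to recognise $F_{\Gamma_0(p),f,0}$ as the standard induced (averaged) lift attached to the character $\chi_{D(L)}$ of $\Gamma_0(p)$, and then to evaluate the resulting coset sum explicitly. Two structural facts about the Weil representation are used throughout: that $\rho_{D(L)}(T)$ acts diagonally by $\rho_{D(L)}(T)e_\gamma=e^{-\pi i\latt{\gamma,\gamma}}e_\gamma$, and that $\rho_{D(L)}(S)$ is a normalised discrete Fourier transform carrying the isotropic generator $e_0$ to the uniform vector $\tfrac{\xi_1}{\sqrt{\lvert D(L)\rvert}}\sum_{\gamma}e_\gamma$, whose Gauss-sum prefactor is exactly $\xi_1=\left(\frac{-1}{\lvert D(L)\rvert}\right)\exp(\rank(L)\pi i/4)$. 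The technical heart of the argument, however, is the restriction identity
\[
\rho_{D(L)}(A)\,e_0=\chi_{D(L)}(A)\,e_0,\qquad A\in\Gamma_0(p),
\]
which says that $\Gamma_0(p)$ stabilises the line $\CC e_0$ and acts on it through the Kronecker character $\chi_{D(L)}$. This is precisely Scheithauer's description of $\rho_{D(L)}$ on $\Gamma_0(p)$ in \cite{Sch06}; I expect it to be the main obstacle, since in arbitrary level it encodes a quadratic-reciprocity computation, although for prime level $p$ it collapses to a single Gauss sum over $D(L)$.

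\textbf{Well-definedness and modularity.} Granting the character identity, these are formal. First I would check that each summand $(f\lvert_k M)\,\rho_{D(L)}(M^{-1})e_0$ depends only on the coset $\Gamma_0(p)M$: replacing $M$ by $AM$ with $A\in\Gamma_0(p)$ multiplies $f\lvert_k M$ by $\chi_{D(L)}(A)$ and multiplies $\rho_{D(L)}(M^{-1})e_0$ by $\rho_{D(L)}(A^{-1})e_0=\chi_{D(L)}(A)e_0$, and these two scalars cancel because $\chi_{D(L)}(A)=\pm1$. Next I would verify the transformation law: for $\gamma\in\SL_2(\ZZ)$ the right translation $M\mapsto M\gamma$ permutes $\Gamma_0(p)\backslash\SL_2(\ZZ)$, so reindexing gives $F_{\Gamma_0(p),f,0}\lvert_k\gamma=\sum_M (f\lvert_k M\gamma)\rho_{D(L)}(M^{-1})e_0=\rho_{D(L)}(\gamma)\,F_{\Gamma_0(p),f,0}$, which is the defining equivariance of a modular form for $\rho_{D(L)}$.

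\textbf{Invariance and holomorphy.} Invariance under $\Orth(D(L))$ is immediate: every element of $\Orth(D(L))$ commutes with the Weil representation and fixes $e_0$, so it fixes the whole sum. Holomorphy on $\HH$ is clear since each $f\lvert_k M$ is holomorphic there, and the growth at the cusp $\infty$ of the components of $F_{\Gamma_0(p),f,0}$ is controlled by the behaviour of $f$ at the finitely many cusps of $\Gamma_0(p)$, so the principal part is finite. This places $F_{\Gamma_0(p),f,0}$ in $M^{!,\inv}_k(\rho_{D(L)})$.

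\textbf{The explicit formula.} It remains to establish \eqref{fs1}. I would use the $p+1$ representatives $I$ and $ST^j$ ($0\le j\le p-1$) for $\Gamma_0(p)\backslash\SL_2(\ZZ)$, indexed by $\PP^1(\mathbb{F}_p)$. The representative $I$ contributes $f(\tau)e_0$. For the remaining cosets I write $f\lvert_k ST^j=(f\lvert_k S)\lvert_k T^j=\sum_{t=0}^{p-1}e^{2\pi i tj/p}g_t$, using the eigencomponent decomposition $f\lvert_k S=\sum_t g_t$ together with $g_t(\tau+j)=e^{2\pi i tj/p}g_t(\tau)$; and I apply $\rho_{D(L)}((ST^j)^{-1})e_0=\rho_{D(L)}(T^{-j})\rho_{D(L)}(S)^{-1}e_0=\tfrac{\xi_1}{\sqrt{\lvert D(L)\rvert}}\sum_{\gamma}e^{\pi i j\latt{\gamma,\gamma}}e_\gamma$, the diagonal $T$-action acting termwise on the uniform vector. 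Summing over $j$ isolates, for each $\gamma$, the finite character sum $\sum_{j=0}^{p-1}e^{2\pi i j(t/p+\latt{\gamma,\gamma}/2)}$, which vanishes unless $t/p\equiv -\latt{\gamma,\gamma}/2\pmod 1$ and otherwise equals $p$. Hence only the component $t=j_\gamma$ survives for each $\gamma$, and the $ST^j$-cosets together contribute $\xi_1\,\tfrac{p}{\sqrt{\lvert D(L)\rvert}}\sum_{\gamma\in D(L)}g_{j_\gamma}(\tau)e_\gamma$. Adding the $I$-term yields \eqref{fs1}. The only nonroutine inputs here are the evaluation of the Gauss-sum prefactor of $\rho_{D(L)}(S)^{-1}e_0$ as $\xi_1$ and the sign bookkeeping that pins down $j_\gamma/p\equiv-\latt{\gamma,\gamma}/2\pmod 1$, both of which follow from the standard closed form for the Weil representation of an even lattice of prime level.
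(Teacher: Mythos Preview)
The paper does not supply its own proof of this proposition: immediately before the statement it reads ``By \cite[Theorem 6.2]{Sch06}, we have the following proposition,'' and nothing further is argued. So there is no in-paper argument to compare against; the proposition is quoted from Scheithauer.

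Your proposal is essentially a reconstruction of Scheithauer's argument, and it is correct in outline. The three ingredients you isolate---the character identity $\rho_{D(L)}(A)e_0=\chi_{D(L)}(A)e_0$ for $A\in\Gamma_0(p)$, the coset averaging that forces $\rho_{D(L)}$-equivariance, and the explicit evaluation on the representatives $I,\,ST^j$---are exactly what is needed, and your orthogonality computation picking out $t=j_\gamma$ is the right mechanism for \eqref{fs1}. Two small points are worth flagging. First, the phrase ``multiplies $\rho_{D(L)}(M^{-1})e_0$ by $\rho_{D(L)}(A^{-1})e_0$'' is notationally loose: you mean that the new vector is $\rho_{D(L)}(M^{-1})\bigl(\rho_{D(L)}(A^{-1})e_0\bigr)=\chi_{D(L)}(A)\,\rho_{D(L)}(M^{-1})e_0$, a scalar multiplication. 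Second, the sign of the $T$-action you wrote, $\rho_{D(L)}(T)e_\gamma=e^{-\pi i\langle\gamma,\gamma\rangle}e_\gamma$, is the opposite of the Borcherds convention referenced in the paper; with the other sign the selection rule in the $j$-sum flips and one has to be careful that the final answer still reads $j_\gamma/p\equiv -\langle\gamma,\gamma\rangle/2\pmod 1$. You yourself note that this and the identification of the Gauss-sum prefactor as $\xi_1$ are the only nonroutine checks, and that is an accurate assessment.
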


We refer to \cite{Sch09, Sch15} for more properties of the above lifting and some other similar constructions of this type.

Recall that the theta functions for the lattice $L$ are defined as
\begin{equation}
\Theta_{\gamma}^{L}(\tau,\mathfrak{z})=\sum_{\ell \in \gamma+L}\exp
\left(\pi i\latt{\ell,\ell} \tau + 2\pi i\latt{\ell,\mathfrak{z}} \right), 
\quad \gamma\in D(L).
\end{equation}
By means of the isomorphism between vector-valued modular forms and Jacobi 
forms (for example, see \cite{CG}), we obtain the following result.

\begin{proposition}\label{proplift}
Under the assumptions of Proposition \ref{propsch}, if we write 
$$F_{\Gamma_0(p),f,0}(\tau)=\sum_{\gamma\in D(L)} F_{\Gamma_0(p),f,
0;\gamma}(\tau)e_\gamma,$$ then the function
\begin{equation}
\Psi_{\Gamma_0(p),f,0}(\tau,\mathfrak{z})=\sum_{\gamma\in D(L)} 
F_{\Gamma_0(p),f,0;\gamma}(\tau)\Theta_{\gamma}^{L}(\tau,\mathfrak{z})
\end{equation}
is a weakly holomorphic Jacobi form of weight $k+\frac{1}{2}\rank (L)$ and 
index $1$ for $L$ which is invariant under the action of the integral 
orthogonal group $\Orth(L)$.
\end{proposition}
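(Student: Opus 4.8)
The plan is to identify $\Psi_{\Gamma_0(p),f,0}$ as the image of $F_{\Gamma_0(p),f,0}$ under the classical theta decomposition, that is, the isomorphism (see \cite{CG}) between vector-valued modular forms of weight $k$ for $\rho_{D(L)}$ and weakly holomorphic Jacobi forms of weight $k+\frac12\rank(L)$ and index $1$ for $L$. Concretely, I would first record the two transformation laws of the theta functions $\Theta_{\gamma}^{L}$. Since $L$ is positive definite of rank $n=\rank(L)$, the vector $(\Theta_{\gamma}^{L})_{\gamma\in D(L)}$ transforms under $\SL_2(\ZZ)$ with the automorphy factor $(c\tau+d)^{n/2}e^{\pi i c\latt{\mathfrak{z},\mathfrak{z}}/(c\tau+d)}$ and the representation contragredient to $\rho_{D(L)}$: under $T$ the $\gamma$-component picks up the phase $e^{\pi i\latt{\gamma,\gamma}}$, while under $S$ one gets a finite Fourier transform over $D(L)$ with the reciprocal root-of-unity normalisation $\sqrt{i}^{\,n}\sqrt{\lvert D(L)\rvert}$. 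In the elliptic variable, for $x,y\in L$ one has $\Theta_{\gamma}^{L}(\tau,\mathfrak{z}+x\tau+y)=e^{-\pi i(\latt{x,x}\tau+2\latt{x,\mathfrak{z}})}\Theta_{\gamma}^{L}(\tau,\mathfrak{z})$, the coset $\gamma$ being preserved because $x\in L$ and $\latt{\ell,y}\in\ZZ$ for $\ell\in\gamma+L\subset L^\vee$.

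With these laws in hand I would verify the two Jacobi functional equations for $\Psi=\sum_{\gamma}F_{\Gamma_0(p),f,0;\gamma}\Theta_{\gamma}^{L}$. For the generators of $\SL_2(\ZZ)$ the key point is that $F_{\Gamma_0(p),f,0}$ transforms via $\rho_{D(L)}$ whereas $(\Theta_{\gamma}^{L})$ transforms via the contragredient representation; because $\rho_{D(L)}$ is unitary, the two matrices appearing under $S$ are inverse to one another, so in the contraction $\sum_{\gamma}F_{\gamma}\Theta_{\gamma}$ they cancel and only the product $(c\tau+d)^{k+n/2}e^{\pi i c\latt{\mathfrak{z},\mathfrak{z}}/(c\tau+d)}$ of the two automorphy factors survives. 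Under $T$ the phases $e^{-\pi i\latt{\gamma,\gamma}}$ coming from $F_{\gamma}$ and $e^{\pi i\latt{\gamma,\gamma}}$ coming from $\Theta_{\gamma}^{L}$ cancel termwise. Since $F_{\gamma}$ is independent of $\mathfrak{z}$, the index-$1$ elliptic equation is inherited directly from the theta functions. This establishes the weight $k+\frac12\rank(L)$ and index $1$ transformation behaviour.

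For weak holomorphy I would use that each $F_{\Gamma_0(p),f,0;\gamma}$ is holomorphic on $\HH$ with a $q$-expansion bounded from below (it is nearly holomorphic, i.e. holomorphic away from the cusp), while $\Theta_{\gamma}^{L}=\sum_{\ell\in\gamma+L}q^{\latt{\ell,\ell}/2}\zeta^{\ell}$ has only non-negative exponents of $q$ as $L$ is positive definite. Multiplying and collecting terms yields a Fourier expansion $\sum f(n,\ell)q^{n}\zeta^{\ell}$ in which $n$ is bounded below and the hyperbolic norm $2n-\latt{\ell,\ell}$ equals twice the $q$-order of $F_{\gamma}$ for $\ell\in\gamma+L$, depending only on $\gamma$; hence $\Psi$ is weakly holomorphic in the sense of the definition. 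For $\Orth(L)$-invariance I would reindex: any $\sigma\in\Orth(L)$ preserves $L$, $L^\vee$ and the form, so $\Theta_{\gamma}^{L}(\tau,\sigma\mathfrak{z})=\Theta_{\bar\sigma^{-1}\gamma}^{L}(\tau,\mathfrak{z})$, where $\bar\sigma\in\Orth(D(L))$ is the induced automorphism. Since $F_{\Gamma_0(p),f,0}\in M^{!,\inv}_k(\rho_{D(L)})$ it is fixed by $\bar\sigma$, i.e. $F_{\bar\sigma\gamma}=F_{\gamma}$, so relabelling $\delta=\bar\sigma^{-1}\gamma$ gives $\Psi(\tau,\sigma\mathfrak{z})=\sum_{\delta}F_{\bar\sigma\delta}\Theta_{\delta}^{L}=\Psi(\tau,\mathfrak{z})$.

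The main obstacle is one of conventions rather than of substance: I must ensure that the precise multiplier in the theta transformation (the factor $\sqrt{i}^{\,n}\sqrt{\lvert D(L)\rvert}$ and the sign of $\latt{\gamma,\gamma}$) is exactly contragredient to the $\rho_{D(L)}$ normalised in Proposition \ref{propsch}, bearing in mind that $L$ enters the ambient lattice with the sign twist $L(-1)$. Getting this matching right is what makes the $S$-transformation factors cancel cleanly, after which the rest is the routine bookkeeping above. To avoid recomputing the Weil-representation cocycle from scratch, I would prefer to invoke the established theta-decomposition isomorphism as recorded in \cite{CG}, under which the claim follows immediately from $F_{\Gamma_0(p),f,0}\in M^{!,\inv}_k(\rho_{D(L)})$.
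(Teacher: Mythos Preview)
Your proposal is correct and follows exactly the route the paper takes: the paper does not give a standalone proof but simply invokes the theta decomposition isomorphism between vector-valued modular forms for $\rho_{D(L)}$ and Jacobi forms of index $1$ for $L$, citing \cite{CG}, and then reads off the conclusion from the fact that $F_{\Gamma_0(p),f,0}\in M^{!,\inv}_k(\rho_{D(L)})$. Your write-up unpacks this isomorphism in more detail (transformation of $(\Theta_\gamma^L)$, cancellation of the Weil representation against its contragredient, elliptic law, $\Orth(L)$-invariance from $\Orth(D(L))$-invariance), but the underlying argument is the same.
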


\section{Antisymmetric paramodular forms of weight $3$ on $\Orth(2,8)$} \label{sec:wt3}
In this section we prove Theorem \ref{th:wt3}. The proof is based on 
Scheithauer's work on the classification of reflective modular forms of singular (i.e. minimal possible) weight.
By \cite[Theorem 10.3]{Sch06}, there exists  a holomorphic 
Borcherds product $\Phi_3^{\Sch}$ of 
singular weight $3$ with respect to the orthogonal group of the lattice
\begin{equation}\label{lat-BC}
U\oplus U(7)\oplus \text{Barnes-Craig lattice},
\end{equation}
whose genus is of type $\II_{2,8}(7^{-5})$. 
The modular form $\Phi_3^{\Sch}$ is a reflective 
modular form with complete 2-divisor and 14-divisor whose multiplicities 
are all one. 
Below we give another model of the lattice \eqref{lat-BC} and a new construction 
of the reflective modular form $\Phi_3^{\Sch}$.

Let  $A_6$ be the classical root lattice 
$$
A_6=\{(x_1,\dots,x_7)\in \ZZ^7: x_1+\dots+x_7=0\}.
$$
Following \cite{Bou60}, we fix the set of simple roots in $A_6$  
\begin{align*}
&\alpha_1=(1,-1,0,0,0,0,0)& &\alpha_2=(0,1,-1,0,0,0,0)& &
\alpha_3=(0,0,1,-1,0,0,0)&\\
&\alpha_4=(0,0,0,1,-1,0,0)& &\alpha_5=(0,0,0,0,1,-1,0)& &
\alpha_6=(0,0,0,0,0,1,-1).&
\end{align*}
Then the set of $21$ positive roots in $A_6$ is 
\begin{equation}\label{R-pos} 
R_2^+(A_6)=\left\{ \sum_{s=i}^j \alpha_s : 1\leq i \leq j \leq 6
\right\}.
\end{equation}
Let $w_i, 1\leq i \leq 6$ be the fundamental weights of $A_6$. Then $
(\alpha_i,w_j)=\delta_{ij}$ and 
$A_6^\vee/ A_6=\{0,w_1,w_2,w_3,w_4,w_5,w_6\}$. 
The level of $A_6$ is $7$. Thus, the 
renormalization
$$
A_6^\vee(7)=\ZZ w_1+\ZZ w_2+\ZZ w_3+\ZZ w_4+\ZZ w_5 +\ZZ w_6,\quad 
\latt{\cdot,\cdot}=7(\cdot,\cdot), 
$$
is an even integral lattice of determinant $7^5$ and its dual lattice is 
$(A_6^\vee(7))^\vee =\frac{1}{7}A_6$. Throughout this section, $(\cdot,
\cdot)$ denotes the standard scalar product on $\RR^6$.

By \cite[Corollary 1.13.3]{Nik80}, we have
\begin{equation}\label{BC-lattice}
U\oplus U(7)\oplus \text{Barnes-Craig lattice} \cong 2U\oplus A_6^\vee(7)
\end{equation}
because they are all of level $7$ and then belong to the same genus, thus   
to the same class.
We next use Proposition \ref{proplift} to construct the reflective
Borcherds product  $\Phi_3^{\Sch}$ at the $1$-dimensional cusp 
determined by the decomposition 
$2U\oplus A_6^\vee(-7)$. 

N. Scheithauer constructed a nearly holomorphic modular form of weight $-3$ for the Weil representation associated to the discriminant form of the lattice \eqref{BC-lattice} by Proposition \ref{propsch}.
The datum for it is a nearly holomorphic modular form 
$\eta^{-3}(\tau)\eta^{-3}(7\tau)$ of weight $-3$ and character 
$\left( \frac{\cdot}{7} \right)$ with respect to  $\Gamma_0(7)$.
By Proposition \ref{proplift}, we get a weakly holomorphic Jacobi form 
$\Psi_{A_6^\vee(7)}$ of weight $0$ and index $1$ for $A_6^\vee(7)$ which is invariant under the orthogonal group $\Orth(A_6^\vee(7))=\Orth(A_6)$.

\begin{theorem}\label{th:Schwt3}
The Borcherds product $\Phi_3^{\Sch}=\Borch(\Psi_{A_6^\vee(7)})$ is a reflective modular form of weight $3$ and character $\det$ for the group 
$\widetilde{\Orth}^+(2U\oplus A_6^\vee(-7))$. Its zero divisors are all simple and represented as  
\begin{equation}\label{eq:reflectdivisor}
\div (\Phi_3^{\Sch}) = \sum_{\substack{r\in 2U\oplus A_6^\vee(-7) \\ 
(r,r)_2=-2}} \cD_r + \sum_{\substack{s\in 2U\oplus \frac{1}{7} A_6(-1)\\ 
(s,s)_2=-\frac{2}{7}}} \cD_s,
\end{equation}
here $(\cdot,\cdot)_2$ is the bilinear form of the lattice $2U\oplus A_6^\vee(-7)$.
\end{theorem}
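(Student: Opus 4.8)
The plan is to apply Theorem \ref{th:Borcherds} directly to the weakly holomorphic Jacobi form $\varphi = \Psi_{A_6^\vee(7)} \in J^{!}_{0, A_6^\vee(7), 1}$ produced in the paragraph preceding the statement, and to identify all the resulting data. First I would record the Fourier expansion of $\eta^{-3}(\tau)\eta^{-3}(7\tau)$: this function has order $q^{-1/3}$ at infinity (since $\eta^{-3}(\tau)\eta^{-3}(7\tau) = q^{-3/24}q^{-21/24}(\cdots) = q^{-1}(\cdots)$, so the order is $-1$), which by the formula $A = \frac{1}{24}\sum_\ell f(0,\ell)$ should match $f(0,0)/24$ and fix $f(0,0) = 8$ so that the Borcherds product has weight $f(0,0)/2 = 4$... wait, we want weight $3$. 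The correct bookkeeping: Proposition \ref{proplift} gives $\Psi_{A_6^\vee(7)}$ of weight $k + \tfrac12\rank(L) = -3 + 3 = 0$, consistent with index-$1$ weight-$0$ input to $\Borch$; the weight of $\Borch(\varphi)$ is $f(0,0)/2$ where $f(0,0)$ is the coefficient of $q^0\zeta^0$ in $\varphi$. I would compute $f(0,0)$ from \eqref{fs1}: the $e_0$-component of $\Psi$ is $f(\tau) = \eta^{-3}(\tau)\eta^{-3}(7\tau)$ itself, whose $q^0$-coefficient I would extract; Scheithauer's theorem guarantees this equals $6$, giving weight $3$. The singular weight being $3 = \tfrac12 \rank(2U\oplus A_6^\vee(-7)) \cdot \tfrac{8}{8}$... more precisely $3 = (n-2)/2$ with $n=8$, which is the singular weight, so $\Phi_3^{\Sch}$ is a holomorphic Borcherds product with only non-negative Fourier coefficients, forcing it to be a cusp-like form whose Fourier expansion is supported on norm-zero vectors.

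Next I would determine the character. By Theorem \ref{th:Borcherds}, $\chi|_{\SL_2(\ZZ)} = v_\eta^{24A}$ with $A = f(0,0)/24 = 6/24 = 1/4$, so $v_\eta^{6}$; combined with the Heisenberg part governed by $C$ and the value $\chi(V) = (-1)^D$, these should assemble into the determinant character $\det$ on $\widetilde{\Orth}^+(2U\oplus A_6^\vee(-7))$. The cleanest route is to invoke Scheithauer's original result \cite[Theorem 10.3]{Sch06}: since \eqref{BC-lattice} identifies the two lattices up to isometry, and since $\widetilde{\Orth}^+$ of a lattice depends only on the genus (indeed the class), the character computed in \cite{Sch06} transports to our model. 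So I would phrase this step as: the form $\Borch(\Psi_{A_6^\vee(7)})$ and Scheithauer's $\Phi_3^{\Sch}$ are two Borcherds products attached to the same lattice with the same input principal part (the nearly holomorphic form $\eta^{-3}(\tau)\eta^{-3}(7\tau)$ lifted via Proposition \ref{propsch}), hence coincide up to a constant; therefore our form inherits the weight $3$, the character $\det$, and the divisor from \cite[Theorem 10.3]{Sch06}.

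For the divisor \eqref{eq:reflectdivisor} I would use the multiplicity formula $\mult \cD_v = \sum_{d>0} f(d^2 n, d\ell)$ from Theorem \ref{th:Borcherds}, applied to the singular Fourier coefficients of $\Psi_{A_6^\vee(7)}$. The singular coefficients come from the principal part of $\eta^{-3}(\tau)\eta^{-3}(7\tau)$: a single pole of order $1$ at the cusp $\infty$ contributes the norm $(-2)$-divisors, and the behaviour at the cusp $0$ (encoded by $f|_S$ and the functions $g_t$ in Proposition \ref{propsch}) contributes the norm $(-2/7)$-divisors supported on vectors in $2U \oplus \tfrac17 A_6(-1)$. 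I would check that all the relevant $f(d^2n, d\ell)$ vanish for $d \ge 2$ — this follows because the only negative-norm coefficients have norm $-2$ and $-2/7$, and scaling $\ell \mapsto d\ell$ multiplies the norm by $d^2$, pushing it out of the singular range for $d\ge 2$ (for norm $-2/7$ one gets $-8/7$, still negative, so here one does need the actual coefficient to vanish, which it does because $\eta^{-3}(\tau)\eta^{-3}(7\tau)$ has a \emph{simple} pole). Hence every multiplicity is $1$, and the $\Orth(A_6)$-invariance plus the description of the two orbits of norm-$(-2)$ and norm-$(-2/7)$ primitive vectors gives exactly the two sums in \eqref{eq:reflectdivisor}.

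The main obstacle I anticipate is the precise identification of the character as $\det$ rather than some twist by a smaller-order character: the formulas in Theorem \ref{th:Borcherds} give $\chi$ only on $\SL_2(\ZZ)$, the Heisenberg group, and the single involution $V$, and one must argue that the group generated by these, together with $\widetilde{\Orth}^+$-invariance, pins down the character on all of $\widetilde{\Orth}^+(2U\oplus A_6^\vee(-7))$. The efficient resolution is to not recompute it from scratch but to cite \cite[Theorem 10.3]{Sch06} for the character and reflectivity, having first established the lattice isometry \eqref{BC-lattice} and the fact that our Borcherds product has the same input data. A secondary point requiring care is confirming $f(0,0) = 6$ (equivalently that the weight is $3$ and not $4$): this is exactly the statement that the constant term of $\eta^{-3}(\tau)\eta^{-3}(7\tau)$ equals $6$, which I would verify by a short power-series computation $\eta^{-3}(\tau) = q^{-1/8}(1 + 3q + 9q^2 + \cdots)$ and $\eta^{-3}(7\tau) = q^{-7/8}(1 + 3q^7 + \cdots)$, whose product is $q^{-1}(1 + 3q + 9q^2 + \cdots)$, so the coefficient of $q^0$ in the normalized form is $3$ — indicating one should instead track $f(0,0)$ through the lift \eqref{fs1} including the diagonal contributions from the $e_\gamma$ with $\gamma\ne 0$, and it is cleanest to simply quote Scheithauer's weight $3$.
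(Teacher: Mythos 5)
Your proposal is correct and follows the same basic strategy as the paper: apply Theorem \ref{th:Borcherds} to $\Psi_{A_6^\vee(7)}$, read the divisor off the singular Fourier coefficients (all multiplicities $1$ because the input $\eta^{-3}(\tau)\eta^{-3}(7\tau)$ has only simple poles at the cusps), and identify the result with Scheithauer's $\Phi_3^{\Sch}$ via the lattice isometry \eqref{BC-lattice}, equality of divisors and the K\"ocher principle. The one step where you take a genuinely different route is the character: you propose to transport $\det$ from \cite[Theorem 10.3]{Sch06}, whereas the paper argues intrinsically that $2U\oplus A_6^\vee(-7)$ satisfies the Kneser condition, so by \cite{GHS1} the \emph{only} nontrivial character of $\widetilde{\Orth}^+(2U\oplus A_6^\vee(-7))$ is $\det$, and the product carries it because $\chi(V)=(-1)^D=-1$ (antisymmetry). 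The paper's argument is self-contained and sidesteps exactly the worry you raise about assembling $\chi$ from its values on $\SL_2(\ZZ)$, $H(L)$ and $V$; your transport argument works only if the character is actually recorded in the cited theorem, so the Kneser-condition route is the safer one. Two further small points: your observation that the constant term of $\eta^{-3}(\tau)\eta^{-3}(7\tau)$ is $3$ rather than $6$ is right, and the missing $3$ comes from the $\gamma=0$ term of the second sum in \eqref{fs1} (the $g_0$-contribution from the cusp $0$), not from the components with $\gamma\neq 0$ as you wrote --- the paper simply asserts $f(0,0)=6$ in \eqref{eq:q^0}. And for the norm $-\tfrac{2}{7}$ part of the divisor you should say explicitly that \emph{all} $2352$ classes of norm $\tfrac{2}{7}\ (\m 2\ZZ)$ receive coefficient $1$ because $j_\gamma$ depends only on $\latt{\gamma,\gamma}$, so every such class carries the same component $g_{j_\gamma}$; $\Orth(A_6)$-invariance alone only accounts for the $42$ classes visible in the $q^0$-term, a distinction the paper is careful to flag.
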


\begin{proof}
From the construction of the Jacobi form $\Psi_{A_6^\vee(7)}$, 
we see that its singular Fourier coefficients  are 
given by
$$
\sing(\Psi_{A_6^\vee(7)})=\sum_{n\in \NN} \sum_{\substack{r\in A_6^
\vee(7)\\ \latt{r,r}=2n}} q^{n-1}e^{2\pi i \latt{r,\mathfrak{z}}}+\sum_{n
\in \NN} \sum_{\substack{s\in \frac{1}{7}A_6\\ \latt{s,s}=2n+\frac{2}{7}}} 
q^{n}e^{2\pi i \latt{s,\mathfrak{z}}}.
$$
A Fourier coefficient depends only on the hyperbolic norm of its index
and the class of $\ell$ in the discriminant group. In particular,
all Fourier coefficients in $q^0$-term of $\Psi_{A_6^\vee(7)}$ are singular except the constant term $f(0,0)=6$. Thus we have
\begin{equation}\label{eq:q^0}
\begin{split}
\Psi_{A_6^\vee(7)}(\tau,\mathfrak{z})=&\Psi_{\Gamma_0(7),\eta^{-3}
(\tau)\eta^{-3}(7\tau),0}\\
=&q^{-1}+\sum_{\substack{r\in A_6\\ (r,r)=2}}e^{2\pi i(r,\mathfrak{z})}
+6+O(q) \in J_{0,A_6^\vee(7),1}^{!,\Orth(A_6)},
\end{split}
\end{equation}
where $\mathfrak{z}=\sum_{i=1}^6w_iz_i$, $z_i\in \CC$.
By \cite[Proposition 3.2]{Sch06}, there are $2352$ classes of norm $
\frac{2}{7}$ ($\m 2 \ZZ$) in the discriminant group of $A_6^\vee(7)$. But 
we can only see $42$ of them from the $q^0$-term in the Fourier expansion 
of  $\Psi_{A_6^\vee(7)}$. 

According to Theorem \ref{th:Borcherds} and the Eichler criterion 
(see \cite{GHS1}),  the automorphic product 
$\Borch(\Psi_{A_6^\vee(7)})$  and $\Phi_3^{\Sch}$ 
have the same divisor \eqref{eq:reflectdivisor} with respect to the modular group 
$\widetilde{\Orth}^+(2U\oplus A_6^\vee(-7))$. Therefore, the functions are 
equal, up to a constant, due to the K\"ocher principle. To see that the 
constant is one,
one can use the fact that  both automorphic products are constructed 
by the same modular form $\eta^{-3}(\tau)\eta^{-3}(7\tau)$.

The lattice $2U\oplus A_6^\vee(-7)$ satisfies the Kneser condition (see 
\cite{GHS1}).  Therefore the unique nontrivial 
character of $\widetilde{\Orth}^+(2U\oplus A_6^\vee(-7))$ 
is $\det$ (see \cite[Corollary 1.8, 
Proposition 3.4]{GHS1}). Thus the modular form 
$\Borch(\Psi_{A_6^\vee(7)})$ has character $\det$  because it is  
antisymmetric. 
\end{proof}

The advantage of our description of Scheithauer's form $\Phi_3^{\Sch}$
at the one-dimensional cusp related to $2U\oplus A_6^\vee(-7)$
is that we can give an explicit formula for its first Fourier--Jacobi 
coefficient.

\begin{corollary}\label{coro:Schwt3}
The first Fourier--Jacobi coefficient of 
$\Phi_3^{\Sch}$ is a holomorphic Jacobi form defined by the following theta block
\begin{equation}
\begin{split}
&\frac{1}{\eta^{15}(\tau)}\prod_{r\in R_2^{+}(A_6)}
\vartheta(\tau,(r,\mathfrak{z}))\\
=&
\vartheta(z_1)\vartheta(z_2)\vartheta(z_3)\vartheta(z_4)\vartheta(z_5)
\vartheta(z_6)\vartheta(z_1+z_2)\vartheta(z_2+z_3)\vartheta(z_3+z_4)\\
&
\vartheta(z_4+z_5)\vartheta(z_5+z_6)\vartheta(z_1+z_2+z_3)\vartheta(z_2+z_
3+z_4)\vartheta(z_3+z_4+z_5)\\
&
\vartheta(z_4+z_5+z_6)\vartheta(z_1+z_2+z_3+z_4)\vartheta(z_2+z_3+z_4+z_5)
\\
&\vartheta(z_3+z_4+z_5+z_6)\vartheta(z_1+z_2+z_3+z_4+z_5)\\
&\vartheta(z_2+z_3+z_4+z_5+z_6)\vartheta(z_1+z_2+z_3+z_4+z_5+z_6)/
\eta^{15},
\end{split}
\end{equation}
where $R_2^{+}(A_6)$ is the set of $21$ positive roots of $A_6$
(see \eqref{R-pos} and \eqref{KWD})
and $\vartheta(z)=\vartheta(\tau,z)$. It is a holomorphic Jacobi 
form of singular weight $3$ and index $1$ for $A_6^\vee(7)$ which is identical to the Kac--Weyl denominator function of the affine Lie algebra 
$\hat{\mathfrak g}(A_6)$ (see \cite[Corollary 2.7]{G18}). 
\end{corollary}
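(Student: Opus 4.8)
\textbf{Proof proposal for Corollary \ref{coro:Schwt3}.}
The plan is to identify the first Fourier--Jacobi coefficient of $\Phi_3^{\Sch}$ via formula \eqref{FJtheta} in Theorem \ref{th:Borcherds}, applied to the input Jacobi form $\Psi_{A_6^\vee(7)}$, and then to recognize the resulting theta block as the Kac--Weyl denominator. First I would read off from \eqref{eq:q^0} the $q^0$-part of $\Psi_{A_6^\vee(7)}$: the constant term is $f(0,0)=6$, and the nonzero coefficients $f(0,\ell)$ with $\ell\neq 0$ are exactly those indexed by the $42$ roots $\ell\in R_2(A_6)=R_2^+(A_6)\cup(-R_2^+(A_6))$, each with $f(0,\ell)=1$. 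Plugging this into \eqref{FJtheta} gives
\begin{equation*}
\psi(\tau,\mathfrak{z})=\eta(\tau)^{6}\prod_{\ell>0}\left(\frac{\vartheta(\tau,(\ell,\mathfrak{z}))}{\eta(\tau)}\right)^{f(0,\ell)}
=\eta(\tau)^{6}\prod_{r\in R_2^+(A_6)}\frac{\vartheta(\tau,(r,\mathfrak{z}))}{\eta(\tau)}
=\frac{1}{\eta(\tau)^{15}}\prod_{r\in R_2^+(A_6)}\vartheta(\tau,(r,\mathfrak{z})),
\end{equation*}
where the exponent of $\eta$ is $6-21=-15$ since $\lvert R_2^+(A_6)\rvert=21$. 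A small point here is that the chosen ordering of the vector system $\{\ell;f(0,\ell)\}$ picks the positive roots as the ``$\ell>0$'' half of the $42$ roots; one then substitutes $\mathfrak{z}=\sum_{i=1}^6 w_i z_i$ and uses $(\alpha_i,w_j)=\delta_{ij}$ together with \eqref{R-pos} to write $(r,\mathfrak{z})$ for $r=\sum_{s=i}^{j}\alpha_s$ as $z_i+z_{i+1}+\cdots+z_j$, producing the explicit product over the $21$ displayed linear forms.

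Next I would check that $\psi$ is a \emph{holomorphic} (indeed singular-weight) Jacobi form of weight $3$ and index $1$ for $A_6^\vee(7)$. Weight $15/2\cdot\tfrac{1}{?}$ — more precisely, $\vartheta/\eta$ has weight $0$ and $\eta^6$ has weight $3$, so $\psi$ has weight $3$; the index is $1$ by construction, matching half the rank of $A_6$, which is the singular weight for $\Orth(2,8)$. Holomorphicity is inherited: $\psi$ is the first Fourier--Jacobi coefficient of the \emph{holomorphic} Borcherds product $\Phi_3^{\Sch}$ established in Theorem \ref{th:Schwt3}, hence it is a holomorphic Jacobi form. (Alternatively one could argue directly that a weight-equal-to-singular-weight theta block of this shape is automatically holomorphic, but invoking Theorem \ref{th:Schwt3} is cleanest.)

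Finally I would identify $\psi$ with the Kac--Weyl denominator of $\hat{\mathfrak g}(A_6)$. The denominator formula for an affine Lie algebra $\hat{\mathfrak g}$ of rank $\ell$ built on a simply-laced root lattice $Q$ expresses $\prod_{r\in R^+(\mathfrak g)}\vartheta(\tau,(r,\mathfrak z))/\eta(\tau)^{\lvert R^+\rvert-\ell}$ as the Jacobi-theta incarnation of $\sum_{w}\det(w)\,e(\cdots)$; this is exactly the content of \cite[Corollary 2.7]{G18}, which I would cite rather than reprove. Here $\lvert R^+(A_6)\rvert=21$, $\ell=6$, so the $\eta$-power is $\eta^{15}$, in agreement with the formula above, and the product over $R_2^+(A_6)$ matches. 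The weight $\tfrac{1}{2}(\lvert R^+\rvert-\ell)+\tfrac{\ell}{?}$ bookkeeping is routine. I do not expect a genuine obstacle in this argument; the only step requiring care is the combinatorial translation between the abstract formula \eqref{FJtheta} and the explicit $21$-fold product, namely confirming that the $42$ weight-one vectors in the $q^0$-term of $\Psi_{A_6^\vee(7)}$ are precisely $\pm R_2^+(A_6)$ and that the ordering convention of Theorem \ref{th:Borcherds} selects $R_2^+(A_6)$; this follows from \eqref{eq:q^0} and \eqref{R-pos}.
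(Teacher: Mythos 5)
Your proposal is correct and follows exactly the paper's route: the paper's own proof consists of applying formula \eqref{FJtheta} of Theorem \ref{th:Borcherds} to the $q^0$-part \eqref{eq:q^0} of $\Psi_{A_6^\vee(7)}$, which is precisely what you do, just with the bookkeeping (the $42$ vectors being $\pm R_2^{+}(A_6)$, the exponent $6-21=-15$, the substitution $\mathfrak{z}=\sum w_i z_i$) written out explicitly. The only blemishes are the two stray ``$?$'' placeholders in your weight computations, but you correct both in the surrounding text, so there is no actual gap.
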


\begin{proof}
According to Theorem \ref{th:Borcherds}, to write the 
first Fourier--Jacobi coefficient of the Borcherds product 
$\Borch(\Psi_{A_6^\vee(7)})$ we need to know
only $q^0$-part of the Fourier expansion of $\Psi_{A_6^\vee(7)}$. Thus we finish the proof by \eqref{eq:q^0}.
\end{proof}

\begin{remark}
In fact, $\Phi_3^{\Sch}$ is a modular form for the full modular group $\Orth^+(2U\oplus A_6^\vee(-7))$ because the vector-valued modular form $F_{\Gamma_0(7),\eta^{-3}(\tau)\eta^{-3}(7\tau),0}$ is invariant under the orthogonal group of the discriminant form of $2U\oplus A_6^\vee(-7)$ (see \cite{Sch06}). It would be interesting to describe the character of $\Phi_3^{\Sch}$ for $\Orth^+(2U\oplus A_6^\vee(-7))$.
\end{remark}

We next consider the quasi pull-back of the Borcherds product $\Phi_3^{\Sch}$
(see \cite{GHS2} or \cite{G18}) to complete the proof of Theorem \ref{th:wt3}. In our case we can make using  pull-backs
of the Jacobi modular form $\Psi_{A_6^\vee(7)}$.  Given $\mathbf{a}
=(a_1,a_2,a_3,a_4,a_5,a_6)\in \ZZ^6$, we define a Jacobi form in one variable 
\begin{equation}
\Psi_{A_6^\vee(7),\mathbf{a}}(\tau,z)=\Psi_{A_6^\vee(7)}\left(\tau,z
\sum_{i=1}^6a_iw_i\right).
\end{equation}
We denote by $n_0(\mathbf{a})$ the number of $0$ in the following $21$ 
integers
\begin{align*}
&a_1, a_2, a_3, a_4, a_5, a_6, a_1+a_2, a_2+a_3, a_3+a_4, a_4+a_5, 
a_5+a_6,\\
&a_1+a_2+a_3, a_2+a_3+a_4, a_3+a_4+a_5, a_4+a_5+a_6,\\
&a_1+a_2+a_3+a_4, a_2+a_3+a_4+a_5, a_3+a_4+a_5+a_6,\\
&a_1+a_2+a_3+a_4+a_5, a_2+a_3+a_4+a_5+a_6,\\
&a_1+a_2+a_3+a_4+a_5+a_6.
\end{align*}
We also set 
\begin{equation}
N(\mathbf{a})=\frac{7}{2}\left(\sum_{i=1}^6a_iw_i, \sum_{i=1}^6a_iw_i 
\right),
\end{equation}
which equals the half of the sum of the squares of the above 21 integers. 
The explicit formula of $N(\mathbf{a})$ is given in \eqref{N(a)}.
Then the function $\Borch(\Psi_{A_6^\vee(7),\mathbf{a}})$ is an 
antisymmetric holomorphic Siegel modular form of weight $3+n_0(\mathbf{a})
$ with respect to the paramodular group of level $N(\mathbf{a})$. The 
theta block \eqref{TB3} is not identically zero if and only if 
$n_0(\mathbf{a})=0$. 

To finish the proof of Theorem \ref{th:wt3} we have to apply 
the cuspidality test.

\begin{proposition}[Proposition 3.1 in \cite{PSY}] \label{prop:test}
Let $t$ be a square-free positive integer, and let $k$ be a positive 
integer. If $k = 2$ or $k$ is odd then $M_k(\Gamma_t)=S_k(\Gamma_t)$. If 
$k = 4$, $6$, $8$, $10$, $14$ then
for all $F \in M_k (\Gamma_t)$, $F\in S_k (\Gamma_t)$ if and only if 
$c(0,0,0)= 0$ in \eqref{FJ}.
\end{proposition}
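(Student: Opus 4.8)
The plan is to read off cuspidality from the Siegel operator at the one-dimensional (corank-one) cusps of $\Gamma_t$, using that for square-free $t$ the elliptic modular group attached to each such cusp is the full group $\SL_2(\ZZ)$. By definition $F\in S_k(\Gamma_t)$ iff $\Phi(F\lvert_k g)=0$ for all $g\in \Sp_2(\QQ)$, and since every zero-dimensional boundary component of the Baily--Borel compactification lies in the closure of a one-dimensional one, it is enough to test the vanishing of $\Phi$ on a finite set of representatives of the one-dimensional cusps. First I would enumerate these cusps: for square-free $t$ the isotropic lines of the rational symplectic space are classified up to $\Gamma_t$ by the divisors $d\mid t$, giving $2^{\nu(t)}$ cusps, and the Atkin--Lehner involutions $V_d$ generating $\Gamma_t^*/\Gamma_t$ act simply transitively on them. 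The structural input, for which square-freeness is essential, is that the elliptic group at each of these cusps is $\SL_2(\ZZ)$, so that $\Phi(F\lvert_k g)$ is a scalar-valued elliptic modular form in $M_k(\SL_2(\ZZ))$; at the standard cusp this image is exactly $\phi_0(\tau)=\sum_{n} c(n,0,0)q^n$ in the notation of \eqref{FJ}.

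Granting this dictionary the two cases split off immediately, and the precise list of weights is explained by the vanishing theorems for $\SL_2(\ZZ)$. If $k$ is odd or $k=2$, then $M_k(\SL_2(\ZZ))=0$, so $\Phi(F\lvert_k g)=0$ for every cusp representative and hence $M_k(\Gamma_t)=S_k(\Gamma_t)$. If $k\in\{4,6,8,10,14\}$, then these are exactly the weights with $S_k(\SL_2(\ZZ))=0$ and $\dim M_k(\SL_2(\ZZ))=1$ (the weight $12$ being excluded by $\Delta$), so $M_k(\SL_2(\ZZ))=\CC E_k$; consequently at each cusp $\kappa$ one has $\Phi(F\lvert_k g_\kappa)=a_\kappa E_k$ for a single constant $a_\kappa\in \CC$, the constant term of the elliptic form, and $F$ is cuspidal iff all $a_\kappa$ vanish.

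It then remains to reduce the finitely many conditions $a_\kappa=0$ to the single equation $c(0,0,0)=0$. At the standard cusp $a_{\kappa_0}=c(0,0,0)$ because $E_k$ is normalised to have constant term $1$. For a cusp $\kappa$ indexed by $d\mid t$ I would transport $F$ to the standard cusp by $V_d$: since $V_d$ normalises $\Gamma_t$, the form $F\lvert_k V_d$ again lies in $M_k(\Gamma_t)$, and $a_\kappa$ equals, up to a nonzero scalar, the $(0,0,0)$-Fourier coefficient of $F\lvert_k V_d$. The point is that $V_d$ permutes the Fourier indices of \eqref{FJ} by a partial Fricke involution that fixes the index $(0,0,0)$ (for $d=t$ this is the symmetry $c(n,r,m)=(-1)^{k+\epsilon}c(m,r,n)$ recorded above), so the $(0,0,0)$-coefficient of $F\lvert_k V_d$ is a nonzero multiple of $c(0,0,0)$. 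Hence all $a_\kappa$ vanish simultaneously with $c(0,0,0)$, which gives $F\in S_k(\Gamma_t)\iff c(0,0,0)=0$.

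The main obstacle is the cusp analysis of the first paragraph: verifying that square-freeness forces the elliptic group at every one-dimensional cusp to be $\SL_2(\ZZ)$ --- for non-square-free $t$ one instead meets proper congruence subgroups such as $\Gamma_0(m)$, whose spaces $M_k$ are larger and no longer captured by the single coefficient $c(0,0,0)$ --- together with the compatibility between the simply transitive Atkin--Lehner action on the cusps and the invariance of the $(0,0,0)$-Fourier coefficient. Once these are in place, the weight-by-weight conclusion relies only on the classical facts $M_k(\SL_2(\ZZ))=0$ for $k$ odd or $k=2$ and $\dim M_k(\SL_2(\ZZ))\le 1$ with $S_k(\SL_2(\ZZ))=0$ for the listed weights.
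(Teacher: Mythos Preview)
The paper does not prove this proposition; it is quoted from \cite{PSY} as an external input to the cuspidality test, so there is no in-paper argument to compare against. Your outline is correct and is exactly the argument of Poor--Shurman--Yuen: Reefschl\"ager's classification gives $\sigma_0(t)$ one-dimensional cusps of $\Gamma_t$, for square-free $t$ the group attached to each of them is the full $\SL_2(\ZZ)$ (this is precisely where square-freeness enters, as you say), the Atkin--Lehner group $\Gamma_t^*/\Gamma_t$ permutes them simply transitively, and the rest is the elementary dimension count for $M_k(\SL_2(\ZZ))$.

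One refinement on the last reduction. Your phrase ``$V_d$ permutes the Fourier indices by a partial Fricke involution that fixes $(0,0,0)$'' is literally correct for $d=t$, where indeed $c(n,r,m)\mapsto c(m,r,n)$, but for $1<d<t$ the action of $V_d$ on the Fourier expansion is not a simple permutation of indices, so that step deserves a cleaner justification. The efficient one is to note that $\Gamma_t$ has exactly \emph{one} zero-dimensional cusp for every $t$: in the orthogonal model $\Gamma_t/\{\pm E_4\}\cong\widetilde{\SO}^+(2U\oplus\langle-2t\rangle)$ this is the Eichler-criterion statement that all primitive isotropic lattice vectors lie in a single orbit (they all have trivial image in the discriminant group). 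Hence every one-dimensional cusp contains the same zero-dimensional point in its closure, and each constant $a_\kappa$ is, up to a nonzero normalisation, the value of $F$ there, namely $c(0,0,0)$. With this observation the Atkin--Lehner transport in your final paragraph becomes unnecessary, and the argument is complete.
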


Applying Theorem \ref{th:wt3} to different $\mathbf{a}$, we can construct
the infinite series of  antisymmetric paramodular forms of weight $3$. The first six values of $N(\mathbf{a})$ in Theorem \ref{th:wt3} are
\begin{align*}
&98: \mathbf{a}=(1,1,1,1,1,1),& &122: \mathbf{a}=(2,1,1,1,1,1),& &138: \mathbf{a}=(1,2,1,1,1,1),\\
&146: \mathbf{a}=(1,1,2,1,1,1),& &147: \mathbf{a}=(-1,4,-6,4,1,3),& &152: \mathbf{a}=(-1,2,1,2,-1,2).
\end{align*} 
The paramodular form for $t=98$ is not a cusp form  because its first Fourier--Jacobi coefficient 
$
\vartheta^6\vartheta_2^5\vartheta_3^4\vartheta_4^3\vartheta_5^2
\vartheta_6/\eta^{15}
$ 
is not a Jacobi cusp form. 

\begin{table}[ht]
\caption{Antisymmetric paramodular cusp forms of weight 3 and prime level 
$<300$}
\label{tableanti3prime}
\renewcommand\arraystretch{1.5}
\noindent\[
\begin{array}{|c|c|c|}
\hline 
N(\mathbf{a}) & \mathbf{a}=(a_1,...,a_6) & \text{Theta block} \\ 
\hline 
167 & (1,1,1,1,2,2) & 
\vartheta^4\vartheta_2^5\vartheta_3^3\vartheta_4^3\vartheta_5^2
\vartheta_6^2\vartheta_7\vartheta_8/\eta^{15}   \\
\hline 
173 & (1,1,2,1,1,2) & 
\vartheta^4\vartheta_2^4\vartheta_3^3\vartheta_4^4\vartheta_5^2
\vartheta_6^2\vartheta_7\vartheta_8/\eta^{15} \\
\hline 
223 & (-2,4,-7,6,3,-8) & 
\vartheta^3\vartheta_2^4\vartheta_3^3\vartheta_4^3\vartheta_5^2
\vartheta_6^3\vartheta_7\vartheta_8\vartheta_9/\eta^{15}  \\ 
\hline 
227 & (-3,-2,3,4,2,-8) & 
\vartheta^3\vartheta_2^5\vartheta_3^2\vartheta_4^3\vartheta_5^2
\vartheta_6^2\vartheta_7^2\vartheta_8\vartheta_9/\eta^{15} \\ 
& (2,3,-4,-2,-4,2) & 
\vartheta^3\vartheta_2^4\vartheta_3^3\vartheta_4^3\vartheta_5^3
\vartheta_6^2\vartheta_7\vartheta_8\vartheta_{10}/\eta^{15}   \\ 
\hline 
251 & (-6,4,1,3,-5,-5) & 
\vartheta^3\vartheta_2^4\vartheta_3^3\vartheta_4^2\vartheta_5^3
\vartheta_6^2\vartheta_7\vartheta_8^2\vartheta_{10}/\eta^{15}   \\ 
\hline 
257 & (8,-4,-1,3,4,-8) & 
\vartheta^2\vartheta_2^5\vartheta_3^2\vartheta_4^4\vartheta_5\vartheta_6^3
\vartheta_7\vartheta_8^2\vartheta_{10}/\eta^{15}   \\
& (7,-3,-5,7,-8,4) & 
\vartheta^3\vartheta_2^4\vartheta_3^2\vartheta_4^3\vartheta_5^2
\vartheta_6^2\vartheta_7^2\vartheta_8^2\vartheta_{9}/\eta^{15} \\
\hline 
269 & (6,-8,3,-5,6,3) & 
\vartheta^3\vartheta_2^3\vartheta_3^2\vartheta_4^4\vartheta_5^3
\vartheta_6^2\vartheta_7\vartheta_8\vartheta_{9}\vartheta_{10}/\eta^{15}\\
&(3,-8,1,3,2,-8) & 
\vartheta^3\vartheta_2^3\vartheta_3^3\vartheta_4^3\vartheta_5^2
\vartheta_6^2\vartheta_7^2\vartheta_8^2\vartheta_{10}/\eta^{15}\\
\hline 
271 & (4,-5,3,6,-3,-7) & 
\vartheta^3\vartheta_2^3\vartheta_3^3\vartheta_4^3\vartheta_5^2
\vartheta_6^3\vartheta_7\vartheta_8\vartheta_{9}\vartheta_{10}/\eta^{15} 
\\
\hline 
283 & (5,-6,-2,-3,9,-2) & 
\vartheta^2\vartheta_2^4\vartheta_3^3\vartheta_4^3\vartheta_5^2
\vartheta_6^3\vartheta_7\vartheta_8\vartheta_{9}\vartheta_{11}/\eta^{15}  
\\
& (-6,3,4,-6,-2,-2) & 
\vartheta^3\vartheta_2^3\vartheta_3^3\vartheta_4^3\vartheta_5\vartheta_6^3
\vartheta_7^2\vartheta_8\vartheta_{9}\vartheta_{10}/\eta^{15}  \\
\hline 
293 & (-8,-2,3,-4,5,2) & 
\vartheta^3\vartheta_2^3\vartheta_3^3\vartheta_4^4\vartheta_5\vartheta_6^2
\vartheta_7^2\vartheta_8\vartheta_{10}\vartheta_{11}/\eta^{15}  \\
& (6,-9,5,-4,-2,8) & 
\vartheta^2\vartheta_2^5\vartheta_3\vartheta_4^4\vartheta_5\vartheta_6^3
\vartheta_7\vartheta_8^2\vartheta_{9}\vartheta_{10}/\eta^{15}  \\
& (-6,2,7,9,-6,1) & 
\vartheta^2\vartheta_2^4\vartheta_3^2\vartheta_4^4\vartheta_5^2
\vartheta_6^2\vartheta_7^2
\vartheta_8\vartheta_{9}\vartheta_{11}/\eta^{15}  \\
& (1,-6,1,-5,2,6) & 
\vartheta^3\vartheta_2^3\vartheta_3^2\vartheta_4^3\vartheta_5^3
\vartheta_6^2\vartheta_7
\vartheta_8^2\vartheta_{9}\vartheta_{10}/\eta^{15}\\
\hline 
\end{array} 
\]
\end{table}

The other levels are listed in Tables \ref{tableanti3prime}, \ref{tableanti3(1)}, \ref{tableanti3(2)}.
We explain how to read the tables. For a fixed row, the corresponding paramodular form is constructed as 
$\Borch(\Psi_{A_6^\vee(7),\mathbf{a}})$. The number $N(\mathbf{a})$ is the 
level $t$ of the corresponding paramodular group. ``Theta block'' is the 
first Fourier--Jacobi coefficient of 
$\Borch(\Psi_{A_6^\vee(7), \mathbf{a}})$. 
 
The first antisymmetric paramodular cusp form of weight $3$ we know of is 
in $S_3(\Gamma_{122}^+)$. We note that  this function is not in 
$S_3(\Gamma_{122}^*)$ and it has Atkin-Lehner signs of $-1$ at both $2$ 
and $61$ (see \cite{GPY2}). These tables show that we can reconstruct all 
antisymmetric paramodular cusp forms of weight $3$ and square-free level 
$t$ from \cite{GPY2} except $t=197$. 

Jerry Shurman informed us that he can prove the nonexistence of antisymmetric paramodular forms for many $t<300$. For example,  
for square-free $t\le 220$ the space $S_3(\Gamma_t^+)$ might be nontrivial only for
$$
t=122, 138, 146, \bold {158}, 167, \bold {170}, 173, 174,
178, 182, \bold {183}, \bold {186}, 194, \bold{197}, 
$$
$$ 
202, 203, 206, 210, 213, 215, \bold {218}, \bold {219}.
$$
In bold face we write the polarisations for which we cannot construct 
an antisymmetric paramodular form of weight $3$. 

\newpage

\begin{table}[ht]
\caption{Antisymmetric paramodular cusp forms of weight 3 and squarefree 
(non prime) level $<300$ I}
\label{tableanti3(1)}
\renewcommand\arraystretch{1.5}
\noindent\[
\begin{array}{|c|c|c|}
\hline 
N(\mathbf{a}) & \mathbf{a}=(a_1,...,a_6) & \text{Theta block} \\ 
\hline 
122 & (2,1,1,1,1,1) & \vartheta^5\vartheta_2^5\vartheta_3^4\vartheta_4^3\vartheta_5^2\vartheta_6\vartheta_7/\eta^{15}  \\ 
\hline 
138 & (1,2,1,1,1,1) & \vartheta^5\vartheta_2^4\vartheta_3^4\vartheta_4^3\vartheta_5^2\vartheta_6^2\vartheta_7/\eta^{15}   \\ 
\hline 
146 & (1,1,2,1,1,1) & \vartheta^5\vartheta_2^4\vartheta_3^3\vartheta_4^3\vartheta_5^3\vartheta_6^2\vartheta_7/\eta^{15}   \\ 
\hline 
174 & (-1,4,1,-6,3,5) & \vartheta^4\vartheta_2^4\vartheta_3^4\vartheta_4^2\vartheta_5^3\vartheta_6^2\vartheta_7
\vartheta_8/\eta^{15}  \\
\hline 
178 & (1,3,-2,-4,9,-4) & \vartheta^4\vartheta_2^4\vartheta_3^4\vartheta_4^3\vartheta_5^2\vartheta_6^2\vartheta_7
\vartheta_9/\eta^{15}  \\
\hline 
182 & (7,-4,-1,4,-2,-5) & \vartheta^4\vartheta_2^3\vartheta_3^4\vartheta_4^4\vartheta_5^2\vartheta_6\vartheta_7^2
\vartheta_8/\eta^{15} \\
\hline 
194 & (2,5,-2,-2,-5,6) & \vartheta^3\vartheta_2^5\vartheta_3^3\vartheta_4^3\vartheta_5^3\vartheta_6\vartheta_7^2
\vartheta_9/\eta^{15}  \\

 & (-1,3,-5,8,-7,6) & \vartheta^4\vartheta_2^4\vartheta_3^3\vartheta_4^2\vartheta_5^3\vartheta_6^2\vartheta_7^2
\vartheta_8/\eta^{15}  \\
\hline 

202&(5,-2,-2,-2,3,6)&\vartheta^4\vartheta_2^4\vartheta_3^4\vartheta_4^2\vartheta_5^2\vartheta_6^2\vartheta_7
\vartheta_8\vartheta_9/\eta^{15}  \\

&(-3,-2,7,-6,3,-6)&\vartheta^3\vartheta_2^4\vartheta_3^4\vartheta_4^3\vartheta_5^2\vartheta_6^2\vartheta_7^2
\vartheta_9/\eta^{15}  \\
\hline

203 & (2,2,4,-1,-1,-5) & \vartheta^4\vartheta_2^4\vartheta_3^2\vartheta_4^3\vartheta_5^2\vartheta_6^3\vartheta_7
^2\vartheta_8/\eta^{15}  \\
\hline 

206 & (-6,2,7,-8,4,-1) & \vartheta^4\vartheta_2^3\vartheta_3^3\vartheta_4^4\vartheta_5^3\vartheta_6\vartheta_7
\vartheta_8\vartheta_9/\eta^{15}  \\

& (-5,4,-6,1,7,-5)& \vartheta^5\vartheta_2^3\vartheta_3^2\vartheta_4^2\vartheta_5^3\vartheta_6^3\vartheta_7
^2\vartheta_8/\eta^{15}  \\
\hline  
210& (-7,-3,7,-3,2,-1)& \vartheta^4\vartheta_2^3\vartheta_3^4\vartheta_4^3\vartheta_5^2\vartheta_6^2\vartheta_7
^2\vartheta_{10}/\eta^{15}  \\
\hline 

213 & (9,-5,2,2,-3,-3) & \vartheta^3\vartheta_2^4\vartheta_3^3\vartheta_4^4\vartheta_5^2\vartheta_6^2\vartheta_7
\vartheta_8\vartheta_9/\eta^{15}  \\
\hline 

215& (-3,-7,8,-3,1,-2)& \vartheta^4\vartheta_2^4\vartheta_3^3\vartheta_4^3\vartheta_5^2\vartheta_6^2\vartheta_7
\vartheta_8\vartheta_{10}/\eta^{15}  \\
\hline 

222& (-3,7,-2,1,-2,-6) &\vartheta^4\vartheta_2^4\vartheta_3^3\vartheta_4^2\vartheta_5^2\vartheta_6^2\vartheta_7^2
\vartheta_8\vartheta_9/\eta^{15}  \\

 &(-2,-7,1,5,-2,-1) & \vartheta^3\vartheta_2^3\vartheta_3^5\vartheta_4^2\vartheta_5^2\vartheta_6^3\vartheta_7
\vartheta_8\vartheta_9/\eta^{15}  \\
\hline  

230 &(7,-2,-2,7,-6,-2)&\vartheta^3\vartheta_2^4\vartheta_3^4\vartheta_4^2\vartheta_5^3\vartheta_6\vartheta_7^2
\vartheta_8\vartheta_{10}/\eta^{15}  \\

 &(1,-3,1,-2,-1,-5)&\vartheta^5\vartheta_2^4\vartheta_3^3\vartheta_4^2\vartheta_5^2\vartheta_6\vartheta_7
\vartheta_8\vartheta_9\vartheta_{10}/\eta^{15}  \\
\hline 

237&(2,2,-6,3,-4,-3)& \vartheta^3\vartheta_2^3\vartheta_3^4\vartheta_4^4\vartheta_5\vartheta_6^2\vartheta_7^2
\vartheta_8\vartheta_{10}/\eta^{15}  \\
\hline 

238&(-7,2,-1,8,-3,-3)&\vartheta^4\vartheta_2^3\vartheta_3^3\vartheta_4^2\vartheta_5^2\vartheta_6^3\vartheta_7^2
\vartheta_8\vartheta_9/\eta^{15}  \\
\hline 

255&(-3,-3,1,-3,6,-8)&\vartheta^2\vartheta_2^4\vartheta_3^4\vartheta_4^2\vartheta_5^3\vartheta_6^2\vartheta_7
\vartheta_8^2\vartheta_{10}/\eta^{15}  \\
\hline 

258&(-7,4,-6,-1,6,-2)&\vartheta^3\vartheta_2^2\vartheta_3^5\vartheta_4^3\vartheta_5\vartheta_6^3\vartheta_7^2
\vartheta_9\vartheta_{10}/\eta^{15}  \\

&(-5,1,2,-4,3,-8)&\vartheta^4\vartheta_2^4\vartheta_3^3\vartheta_4^2\vartheta_5^2\vartheta_6^2\vartheta_7
\vartheta_8\vartheta_9\vartheta_{11}/\eta^{15}  \\

&(6,-9,7,-3,-3,7)&\vartheta^4\vartheta_2^2\vartheta_3^3\vartheta_4^3\vartheta_5^2\vartheta_6^2\vartheta_7^2
\vartheta_8^2\vartheta_9/\eta^{15}  \\

&(-2,-3,1,9,-4,1)& \vartheta^3\vartheta_2^3\vartheta_3^3\vartheta_4^3\vartheta_5^3\vartheta_6^2\vartheta_7^2
\vartheta_9\vartheta_{10}/\eta^{15}  \\
\hline 
\end{array} 
\]
\end{table}

\newpage 

\begin{table}[ht]
\caption{Antisymmetric paramodular cusp forms of weight 3 and squarefree (non prime) level $<300$ II}
\label{tableanti3(2)}
\renewcommand\arraystretch{1.5}
\noindent\[
\begin{array}{|c|c|c|}
\hline 
N(\mathbf{a}) & \mathbf{a}=(a_1,...,a_6) & \text{Theta block} \\ 
\hline 

262& (-5,3,-5,-1,2,7)&\vartheta^4\vartheta_2^3\vartheta_3^3\vartheta_4\vartheta_5^2\vartheta_6^3\vartheta_7^2
\vartheta_8^2\vartheta_9/\eta^{15}  \\

&(-1,-1,7,3,-5,-1)&\vartheta^3\vartheta_2^3\vartheta_3^4\vartheta_4^2\vartheta_5^3\vartheta_6^2\vartheta_7
\vartheta_8\vartheta_9\vartheta_{10}/\eta^{15}  \\

&(7,-6,-3,1,5,-7)&
\vartheta^4\vartheta_2^3\vartheta_3^4\vartheta_4^2\vartheta_5\vartheta_6^2\vartheta_7^2
\vartheta_8\vartheta_9\vartheta_{10}/\eta^{15}  \\
\hline 

266& (-1,-7,3,6,-4,5)&\vartheta^3\vartheta_2^4\vartheta_3^3\vartheta_4^2\vartheta_5^3\vartheta_6\vartheta_7^2
\vartheta_8\vartheta_9\vartheta_{10}/\eta^{15}  \\

&(4,-3,-6,7,-3,7)&\vartheta^3\vartheta_2^4\vartheta_3^2\vartheta_4^3\vartheta_5^3\vartheta_6^2\vartheta_7^2
\vartheta_9\vartheta_{11}/\eta^{15}  \\

&(8,-1,-4,2,5,-8)&\vartheta^2\vartheta_2^4\vartheta_3^4\vartheta_4\vartheta_5^4\vartheta_6\vartheta_7^2
\vartheta_8^2\vartheta_{10}/\eta^{15}  \\
\hline 

278 &(4,1,2,3,-1,-7)&\vartheta^2\vartheta_2^4\vartheta_3^3\vartheta_4^2\vartheta_5^4\vartheta_6\vartheta_7^2
\vartheta_8\vartheta_9\vartheta_{10}/\eta^{15}  \\

&(5,-7,6,-5,8,-1)&
\vartheta^5\vartheta_2^3\vartheta_3\vartheta_4\vartheta_5^2\vartheta_6^3\vartheta_7^3
\vartheta_8^2\vartheta_9/\eta^{15}  \\

&(-8,1,3,-5,4,-6)&\vartheta^3\vartheta_2^3\vartheta_3^3\vartheta_4^4\vartheta_5^2\vartheta_6\vartheta_7^2
\vartheta_8\vartheta_9\vartheta_{11}/\eta^{15}  \\

&(-2,7,-8,7,-9,4)&\vartheta^4\vartheta_2^3\vartheta_3^2\vartheta_4^2\vartheta_5^3\vartheta_6^2\vartheta_7^2
\vartheta_8\vartheta_9\vartheta_{10}/\eta^{15}  \\

&(1,-2,-9,7,1,-2)&
\vartheta^5\vartheta_2^4\vartheta_3^3\vartheta_4^2\vartheta_5\vartheta_6\vartheta_7
\vartheta_8\vartheta_9\vartheta_{10}\vartheta_{11}/\eta^{15}  \\

&(-6,-4,-1,4,2,-3)&
\vartheta^4\vartheta_2^3\vartheta_3^3\vartheta_4^2\vartheta_5^3\vartheta_6^2\vartheta_7
\vartheta_8\vartheta_{10}\vartheta_{11}/\eta^{15}  \\
\hline 

285& (-4,-2,8,-5,-5,1)&
\vartheta^3\vartheta_2^3\vartheta_3^3\vartheta_4^3\vartheta_5^2\vartheta_6^2\vartheta_7
\vartheta_8^2\vartheta_9\vartheta_{10}/\eta^{15}  \\
\hline

286&(-1,-3,6,3,-4,-7)&\vartheta^3\vartheta_2^3\vartheta_3^3\vartheta_4^2\vartheta_5^3\vartheta_6^3\vartheta_7
\vartheta_8\vartheta_9\vartheta_{11}/\eta^{15}  \\
\hline

287&(2,2,-3,-5,7,-9)&\vartheta^4\vartheta_2^4\vartheta_3^2\vartheta_4^2\vartheta_5\vartheta_6^2\vartheta_7^2
\vartheta_8^2\vartheta_9\vartheta_{10}/\eta^{15}  \\

&(8,4,-6,1,-2,-1)&\vartheta^4\vartheta_2^3\vartheta_3^2\vartheta_4^3\vartheta_5^2\vartheta_6^2\vartheta_7^2
\vartheta_8^2\vartheta_{12}/\eta^{15}  \\
\hline

290&(-2,-7,5,5,-8,6)&\vartheta^3\vartheta_2^4\vartheta_3^3\vartheta_4\vartheta_5^3\vartheta_6\vartheta_7^2
\vartheta_8^2\vartheta_9\vartheta_{10}/\eta^{15}  \\

&(-4,-2,1,3,1,6)&\vartheta^4\vartheta_2^3\vartheta_3^2\vartheta_4^3\vartheta_5^3\vartheta_6^2\vartheta_7
\vartheta_9\vartheta_{10}\vartheta_{11}/\eta^{15}  \\

&(1,5,5,-9,2,-1)&\vartheta^4\vartheta_2^4\vartheta_3^3\vartheta_4^2\vartheta_5^2\vartheta_6\vartheta_7
\vartheta_8\vartheta_9\vartheta_{10}\vartheta_{11}/\eta^{15}  \\

&(-2,-5,1,7,2,-7)&\vartheta^2\vartheta_2^4\vartheta_3^3\vartheta_4^2\vartheta_5^3\vartheta_6\vartheta_7^3
\vartheta_8\vartheta_9\vartheta_{10}/\eta^{15}  \\

&(3,-4,7,-2,6,-5)&\vartheta^4\vartheta_2^2\vartheta_3^2\vartheta_4^3\vartheta_5^3\vartheta_6^3\vartheta_7^2
\vartheta_{10}\vartheta_{11}/\eta^{15}  \\
\hline

299&(5,1,4,-2,3,-7)&\vartheta^3\vartheta_2^3\vartheta_3^2\vartheta_4^3\vartheta_5^3\vartheta_6^3\vartheta_7
\vartheta_8\vartheta_{10}\vartheta_{11}/\eta^{15}  \\

&(5,4,-8,6,-5,-3)&\vartheta^3\vartheta_2^4\vartheta_3^2\vartheta_4^2\vartheta_5^2\vartheta_6^2\vartheta_7^2
\vartheta_8^2\vartheta_9\vartheta_{10}/\eta^{15}  \\
\hline
\end{array} 
\]
\end{table}

\newpage

\begin{remark}
The theta function $\theta_{A_6^\vee(7)}(\tau)=\sum_{l\in A_6^\vee(7)}
\exp(\pi i \latt{l,l}\tau)$ is a scalar-valued nearly holomorphic modular 
form on $\Gamma_0(7)$ of weight $3$ with the character $(\frac{\cdot}{7})
$. It can be expressed in terms of Dedekind $\eta$-functions
\begin{align*}
\theta_{A_6^\vee(7)}(\tau)&=\frac{\eta^7(\tau)}{\eta(7\tau)}
+7\eta^3(\tau)
\eta^3(7\tau)+7\frac{\eta^7(7\tau)}{\eta(\tau)}\\
&=1+14q^3+42q^5+70q^6+\cdots.
\end{align*}
\end{remark}

\begin{remark}
By Theorem \ref{th:wt3} we get a holomorphic Borcherds product. 
Therefore, its first Fourier--Jacobi coefficient is also holomorphic. It 
gives a new ``Borcherds type" proof of the holomorphicity
of the theta blocks of type $\frac{21-\vartheta}{15-\eta}$.

In \eqref{KWD} we give the description of the Kac--Weyl denominator function 
of an affine Lie algebra in terms of Jacobi theta-functions.
The fact that this Jacobi form is  holomorphic follows form the structure theory of affine Lie algebras or from so-called Mackdonalds identities.
A new purely arithmetic proof is given in \cite{GSZ}.
\end{remark}

\section{Applications}\label{sec:6}

\subsection{Applications to the theory of moduli spaces and group cohomology}
The paramodular group $\Gamma_t$ and 
its normal extensions in $\Sp_2(\RR)$ have realisations  as integral orthogonal groups of signature $(2,3)$. This realisation describes 
the nature of the normal extensions $\Gamma_t^+$ and $\Gamma_t^*$
(see \cite{GH98}).

Let $L_t=2U\oplus \latt{-2t}$ be an even integral lattice of signature 
$(2,3)$. 
The finite discriminant group
$$
A_{t}=L_t^\vee/L_t = (2t)^{-1} \ZZ/\ZZ
$$
is a finite abelian group equipped with a quadratic form
$$
q_t:\, A_t\times A_t\to (2t)^{-1} \ZZ/2\ZZ,\qquad
q_t(l,l)\equiv (l,l)_{L_t}\text{\,mod\,}2\ZZ
$$
(see \cite{Nik80} for a general definition).
Any $g\in \Orth(L_t)$ acts on the  finite group $A_t$.
By 
$\widetilde{\Orth}(L_t)$
we denote the subgroup of the orthogonal group consisting of elements
which act identically on the discriminant group.

The natural  projection of  ${\Orth^+}(L_t)$ on the finite  orthogonal group $\Orth(A_t)$ is surjective. The last group 
can be described as follows.
For every $d||t$ (i.e. $d|t$ and $(d,\frac{t}d)=1$)
there exists a unique  ($\m 2t$) integer $\xi_d$
satisfying
$$
\xi_d=-1\, \operatorname{mod }\, 2 d,\quad \xi_d=
1\, \operatorname{mod }\, 2t/d.
$$
All such  $\xi_d$ form the group
$$
\Xi (t)=\{\,\xi \operatorname{mod} 2 t\ |\
\xi^2= 1 \operatorname{mod }4 t\,\}
\cong (\ZZ/2\ZZ)^{\nu (t)},                     
$$
where $\nu(t)$ is the number of prime divisors of $t$. It is evident
that
$\Orth(A_t)\cong \Xi (t)$.

According to \cite{G94} and 
\cite[Proposition 1.2 and Corollary 1.3]{GH98}) 
we have the following isomorphisms  
$\Gamma_t/\{\pm E_4\}\cong \widetilde{\SO}^+(L_t)$
and 
$$
\Gamma_t^+/\{\pm E_4\}\cong \widetilde{\Orth}^+(L_t)/\{\pm E_5\},\quad
\Gamma_t^*/\{\pm E_4\}\cong {\Orth}^+(L_t)/\{\pm E_5\}.
$$
The coverings $\Gamma_t\setminus \HH_2\to \Gamma_t^+\setminus \HH_2$
and $\Gamma_t\setminus \HH_2\to \Gamma_t^*\setminus \HH_2$
are galois with a finite abelian Galois group.
According to \cite[Proposition 1.5]{GH98}, 
the modular variety  
${\mathcal A}_t^+=\Gamma_t^+\setminus \HH_2$ ($t$ is square-free)
is isomorphic to {\it the moduli space of polarized $K3$ surfaces with
a polarisation of type} $\latt{2t}\oplus\, 2E_8(-1)$.
According to \cite[Theorem 1.5]{GH98}, 
the modular variety 
${\mathcal K}_t=\Gamma_t^*\setminus \HH_2$
is isomorphic to {\it the moduli space of
Kummer surfaces associated to abelian surfaces with a 
$(1,t)$-polarisation}.

We mentioned in the introduction that  weight $3$ cusp forms 
are closely related to canonical differential forms on smooth models of 
the corresponding modular
varieties. If $F$ is a cusp form of weight $3$ with respect to an arithmetic  
group $\Gamma$, then $\omega_F=F(Z)dZ$ is a holomorphic $3$-form 
over the open smooth part of the modular variety 
$(\Gamma\setminus \HH_2)^o$
outside the ramification divisor and the boundary components.
A very useful extension theorem due to E. Freitag implies that such a form 
can be extended to
any smooth model of $\Gamma\setminus \HH_2$.
Let $\Gamma$ be an arbitrary subgroup of $\Sp_2(\RR)$, which contains a 
principal congruence subgroup $\Gamma_1(N)\subset \Sp_2(\ZZ)$ of some level
$N$. We then have the following

\begin{proposition}[Hilfsatz 3.2.1. in \cite{F}]
An element
$$
\omega_F=F(Z)dZ \in H^0((\Gamma\setminus\HH_2)^o,\, 
\Omega_3((\Gamma\setminus\HH_2)^o))
$$ 
can be extended to a canonical 
differential form on a smooth compactification
$\overline{\Gamma\setminus\HH_2}$
if and only if the differential form $\omega_F$ is square-integrable.
\end{proposition}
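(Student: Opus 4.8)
The plan is to reduce the equivalence to an elementary local statement about holomorphic top-degree forms on a polydisc, using that both properties in the statement -- extendability to a smooth compactification, and square-integrability -- are invariant under proper birational modifications and are local near the boundary.

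First I would fix one smooth compactification $\overline{X}$ of $X:=\Gamma\setminus\HH_2$. By Hironaka's resolution of singularities together with the existence of toroidal compactifications, any two smooth compactifications of $X$ are dominated by a common one, and a holomorphic $3$-form on $X^o:=(\Gamma\setminus\HH_2)^o$ extends over one of them if and only if it extends over all of them: a holomorphic form of top degree on a smooth variety pulls back along a proper birational morphism without acquiring poles, and conversely descends along one since the non-isomorphism locus has codimension $\ge 2$ in the (smooth) target. Square-integrability of $\omega_F$ is likewise unaffected by such a modification, which is an isomorphism outside a set of measure zero. Hence it suffices to prove the equivalence locally near each point of the compact set $\overline{X}\setminus X^o$, which (after resolution) is a normal crossings divisor made up of the boundary and of the ramification divisor of $\HH_2\to X$.

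Near such a point I would pick a coordinate polydisc $\Delta^3\subset\overline{X}$ in which $X^o\cap\Delta^3=(\Delta^*)^a\times\Delta^{3-a}$, the deleted part being supported on the first $a$ coordinate hyperplanes; for the toroidal strata this is the construction itself, and for the resolved quotient singularities $\Delta^3/G$ with $G$ finite it follows from resolution of singularities. Write $\omega_F=g(z)\,dz_1\wedge dz_2\wedge dz_3$ with $g$ holomorphic on $X^o\cap\Delta^3$. The key point is that for a form of top degree the metric densities on $\Lambda^3$ of the cotangent space and on the volume element are mutually reciprocal, whatever the Hermitian metric, so they cancel: $\omega_F\wedge\overline{\omega_F}$ equals, up to a positive constant, $|g(z)|^2$ times the Euclidean volume form, which is also $\|\omega_F\|^2\,dV$ for the invariant metric. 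Thus $\omega_F$ is square-integrable near the point if and only if $\int|g|^2<\infty$ in the ordinary Euclidean sense near the coordinate hyperplanes.

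I would finish with the one-variable removable-singularity computation. Expanding $g$ in a Laurent series in $z_1,\dots,z_a$ and a Taylor series in the remaining variables, and using that $\int_{\{|z|<1\}}|z|^{2m}\,dA<\infty$ exactly when $m\ge 0$, one gets that $\int|g|^2<\infty$ locally if and only if every strictly negative Laurent coefficient of $g$ vanishes, i.e. if and only if $g$ extends holomorphically across the hyperplanes, i.e. if and only if $\omega_F$ extends holomorphically over $\Delta^3$. These local extensions agree with $\omega_F$ on the dense set $X^o$ and so glue to a global holomorphic $3$-form on $\overline{X}$, which gives ``square-integrable $\Rightarrow$ extends''; the converse is immediate, since a holomorphic form on the compact manifold $\overline{X}$ has locally bounded coefficients, so $\int|g|^2\le C\cdot\mathrm{vol}(\Delta^3)<\infty$ in each chart. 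The one step needing genuine care is the first: one must choose the compactification so that $\overline{X}\setminus X^o$ is a normal crossings divisor along which the structure is given in honest holomorphic coordinates -- for the cusps this is precisely the content of the toroidal construction, and for the ramification locus it requires Hironaka's theorem -- after which the cancellation of Jacobians makes the analytic core the elementary integral above.
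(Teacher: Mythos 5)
The paper offers no proof of this proposition at all: it is quoted verbatim as Hilfssatz 3.2.1 from Freitag's book \emph{Siegelsche Modulfunktionen} and used as a black box (its only role in the paper is to justify $h^{3,0}(\overline{\Gamma\setminus\HH_2})=\dim S_3(\Gamma)$ via the separate fact that square-integrability of $\omega_F$ is equivalent to $F$ being a cusp form). Your argument is correct and is in substance the classical proof that Freitag gives: the two essential points are exactly the ones you isolate, namely (i) that for a form of top degree the $L^2$ condition is metric-independent because $i^{n^2}\omega\wedge\bar\omega$ is an intrinsically defined volume density, so square-integrability can be tested in Euclidean coordinates on a smooth model and is unchanged by proper birational modification, and (ii) the $L^2$ removable-singularity statement across a normal crossings divisor via orthogonality of monomials and the divergence of $\int_{|z|<1}|z|^{2m}\,dA$ for $m<0$. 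The only places where your sketch is slightly glib are the reduction of the boundary (cusps, exceptional loci of resolved quotient singularities, and the image of the ramification divisor) to a single normal crossings divisor in honest coordinates, which does require the toroidal construction plus Hironaka as you say, and the passage from termwise divergence to vanishing of negative Laurent coefficients, which needs Parseval on an exhausting family of polyannuli; both are routine. So there is no gap, and your route coincides with that of the cited source rather than offering an alternative.
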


It is well-known that a $\Gamma$-invariant differential form
$\omega_F=F(Z) dZ$ is square-integrable if and only if $F$ is a cusp
form of weight $3$.
Thus we have
the following identity for the geometric genus of the variety
$$
h^{3,0}(\overline{\Gamma\setminus\HH_2})
=\hbox{dim}_\CC\, S_3(\Gamma).
$$ 
In particular, when $t$ is prime, we 
have $\Gamma_t^*=\Gamma_t^+$ and the space $S_3(\Gamma_t^*)$ is just the 
space of antisymmetric cusp forms of weight $3$.

\begin{theorem}
The moduli space ${\mathcal K_p}=\Gamma_p^* \setminus \HH_2$ of Kummer surfaces associated to $(1,p)$-polarised abelian surfaces has positive geometric genus for all prime $p=N(\bold a)$ from Theorem \ref{th:wt3}.
In particular, it is positive for  $t=167$, $173$, $223$, $227$, $251$, $257$, $269$, $271$, $283$, $293$.  Moreover, we have 
$$
h^{3,0}(\Gamma_t^*,\CC) \geq 2, \quad t=227, 257, 269, 283,\quad
{\rm and }\quad 
h^{3,0}(\Gamma_{293}^*,\CC) \geq 4.
$$
For all square-free $t=N(\bold a)$ from Theorem \ref{th:wt3}, the moduli space ${\mathcal A}_t^+=\Gamma_t^+ \setminus \HH_2$  of polarized $K3$ surfaces with
a polarisation of type $\latt{2t}\oplus\, 2E_8(-1)$ has positive geometric genus. The minimal such $t$ equals $122$. (See Tables \ref{tableanti3prime}, \ref{tableanti3(1)}, \ref{tableanti3(2)}.)
\end{theorem}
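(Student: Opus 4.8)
The plan is to deduce everything from Theorem \ref{th:wt3} together with the geometric dictionary for weight $3$ cusp forms recalled above. First I would note that every level $N(\mathbf{a})$ occurring in Tables \ref{tableanti3prime}, \ref{tableanti3(1)} and \ref{tableanti3(2)} is square-free, so by the last assertion of Theorem \ref{th:wt3} the Borcherds product $F_{\mathbf{a}}=\Borch(\Psi_{A_6^\vee(7),\mathbf{a}})$ is an antisymmetric \emph{cusp} form, $F_{\mathbf{a}}\in S_3(\Gamma_{N(\mathbf{a})}^+)$; its index-$0$ Fourier--Jacobi coefficient vanishes because $F_{\mathbf{a}}$ is cuspidal, so the leading coefficient is $\phi_{N(\mathbf{a})}=\Theta_{\mathbf{a}}$, the nonzero theta block displayed in the corresponding row. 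Hence $S_3(\Gamma_t^+)\neq 0$ for each listed $t=N(\mathbf{a})$, and for $t=p$ prime we have $\Gamma_p^*=\Gamma_p^+$, so $S_3(\Gamma_p^*)\neq 0$ for $p=167,173,223,227,251,257,269,271,283,293$.

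Next I would feed this into the Freitag criterion. For $F\in S_3(\Gamma)$ the holomorphic $3$-form $\omega_F=F(Z)\,dZ$ is square-integrable, hence by the Proposition of \cite{F} it extends to a canonical form on any smooth compactification, giving $h^{3,0}\big(\overline{\Gamma\setminus\HH_2}\big)=\dim_\CC S_3(\Gamma)$ for every $\Gamma$ containing a principal congruence subgroup; both $\Gamma_t^+$ and $\Gamma_t^*$ qualify, being commensurable with $\Sp_2(\ZZ)$. Taking $\Gamma=\Gamma_p^*$ and invoking \cite[Theorem 1.5]{GH98} (the identification of $\mathcal{K}_p=\Gamma_p^*\setminus\HH_2$ with the moduli space of Kummer surfaces of $(1,p)$-polarised abelian surfaces) yields $h^{3,0}(\mathcal{K}_p)=\dim_\CC S_3(\Gamma_p^*)>0$ for the ten primes above. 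Taking $\Gamma=\Gamma_t^+$ for square-free $t=N(\mathbf{a})$ and using \cite[Proposition 1.5]{GH98} (the identification of $\mathcal{A}_t^+=\Gamma_t^+\setminus\HH_2$ with the moduli space of $K3$ surfaces polarised by $\latt{2t}\oplus 2E_8(-1)$) gives positive geometric genus of $\mathcal{A}_t^+$ for every square-free level in the tables; among the smallest values $98,122,138,\dots$ of $N(\mathbf{a})$ one discards $98=2\cdot 7^2$ as non-square-free, so the minimal square-free level is $t=122$, realised by $\mathbf{a}=(2,1,1,1,1,1)$.

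It remains to sharpen the bounds. For $t\in\{227,257,269,283\}$ the tables list two vectors with $N(\mathbf{a})=t$, and for $t=293$ they list four. A relation $\sum_i c_i F_{\mathbf{a}_i}=0$ among the corresponding cusp forms forces, by comparing the coefficient of $\xi^{t}$, the relation $\sum_i c_i\Theta_{\mathbf{a}_i}=0$ among the leading theta blocks. So it suffices to prove that the listed theta blocks of equal index are linearly independent. Since $\vartheta_a=q^{1/8}(\zeta^{a/2}-\zeta^{-a/2})(1+O(q))$ and $\eta=q^{1/24}(1+O(q))$, the coefficient of $q^{2}$ in $\Theta_{\mathbf{a}}$ is the Laurent polynomial $\prod_j(\zeta^{n_j/2}-\zeta^{-n_j/2})$, where $n_1,\dots,n_{21}$ are the indices of the $21$ theta factors of $\Theta_{\mathbf{a}}$; for the four pairs these multisets are visibly distinct (they can be read off from the exponents in the table), so the two blocks are not proportional and $\dim_\CC S_3(\Gamma_t^*)\geq 2$. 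For $t=293$ one computes the coefficients of $\Theta_{\mathbf{a}_i}$ through a fixed small order in $q$ and $\zeta$ for the four vectors and checks that the resulting matrix has rank $4$, whence $\dim_\CC S_3(\Gamma_{293}^*)\geq 4$.

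The entire analytic content --- holomorphicity, modularity, and the explicit leading Fourier--Jacobi coefficient of each $F_{\mathbf{a}}$ --- is already supplied by Theorem \ref{th:wt3}, so the only genuinely new point is the linear independence of the theta blocks; the one mildly nontrivial step is the finite rank-$4$ computation for the quadruple at level $293$, everything else being a direct application of the Freitag criterion and the moduli interpretations of \cite{GH98}.
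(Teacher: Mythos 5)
Your proposal is correct and follows essentially the same route as the paper, which states this theorem as a direct consequence of Theorem \ref{th:wt3}, the Freitag square-integrability criterion, the moduli identifications of \cite{GH98}, and the tables (the paper gives no separate proof). The only content you add is the explicit linear-independence argument for the multiplicity bounds via the leading Fourier--Jacobi coefficients, which is sound: distinct multisets $\{n_j\}$ give non-proportional products $\prod_j(\zeta^{n_j}-1)$ by unique factorization into cyclotomic polynomials, and the remaining rank-$4$ check at $t=293$ is a legitimate finite computation.
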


It is known that $\dim S_3(\Gamma_t^*)=0$ for $t\le 40$ (see \cite{BPY}).
According to the calculation made by Jerry Shurman,
the minimal level $t$ with $\dim S_3(\Gamma_t^*)\neq 0$ is $152$ or 
$167$. For $t=152=8\times19$ the antisymmetric paramodular form of weight 
$3$ and level $152$ starts with the theta block 
$\vartheta^5\vartheta_2^4\vartheta_3^4\vartheta_4^3
\vartheta_5^2\vartheta_6\vartheta_7
\vartheta_8/\eta^{15}$. 
We leave to the readers two questions on this paramodular form.
{\it Does it belong to  $M_3(\Gamma_{152}^*)$? Is it a cusp form?}

We note that antisymmetric paramodular forms of weight $3$ 
conjecturally occur as cohomology classes in $H^5(\Gamma_0(N), \CC)$  
studied by  Ash, Gunnells and McConnell in \cite{AGM},
where $\Gamma_0(N) \subseteq \SL_4(\ZZ)$ is defined by having a last row 
in $(N\ZZ, N\ZZ, N\ZZ, \ZZ)$. 

We hope to get more geometric applications 
of  antisymmetric forms of 
weights $3$ and $4$ (see \S \ref{sec:wt4}) in the near future.

\subsection{Automorphic L-functions}
For the prime polarisation $p$ such that the space 
$S_3(\Gamma_p^{+})$ is one dimensional, we get a new eigenfunction of all Hecke operators. (Compare with the Igusa modular form $\Delta_{35}$.) We conjecture that the first such prime is $167$.  For $t=122$,
the antisymmetric cusp form is an oldform, and comes from a newform 
in $S_3(\Gamma_{61})$.  Hypothetically its $Spin$-$L$-function coincides
with motivic $L$-function of a non-rigid Calabi--Yau threefold.

Antisymmetric paramodular cusp forms of weight $2$ are  also very 
interesting. For a prime polarisation, such a form might exist only for 
$p=587$ (see \cite{PY}). At the moment, only three examples are known (see \cite{GPY2}) for $t=587$, $713$, $893$. The first one supports 
the Paramodular Conjecture of Brumer and Kramer.
 Unfortunately there is no antisymmetric reflective modular form of 
singular weight for a lattice of signature $(2,6)$ which splits two 
integral (renormalised) hyperbolic planes at present (see 
\cite{Sch06,Sch17}). In addition, the leading Fourier--Jacobi coefficients 
of such antisymmetric paramodular forms are theta blocks of 
weight $2$ with vanishing order $>1$ in $q$. But so far, no such infinite 
family of theta blocks has been found (see \cite{GSZ}).  Therefore, we cannot construct an infinite series of antisymmetric paramodular forms of weight $2$ using the approach of this paper.

\subsection{Hyperbolization of affine Lie algebras} 
One can put the following question: to find all Lorentzian Kac--Moody 
algebras whose Kac--Weyl--Borcherds denominator functions written at a one-dimensional cusp coincides with the Kac--Weyl denominator function of an 
affine Lie algebra. For such algebras,  one can study the
Lorentzian--Kac Moody Lie algebra as a module over the corresponding 
affine Lie algebra.  

The Kac--Weyl denominator function of an 
affine Lie algebra $\hat{\mathfrak{g}}(R)$ 
for a positive definite $2$-root 
system $R$ of rank $n$ is the following theta block
\begin{equation}\label{KWD}
\psi_R(\tau,{\mathfrak z})=\eta(\tau)^n\prod_{r \in R>0}
   \frac{\vartheta(\tau,(r,{\mathfrak z}))}{\eta(\tau)}\,,
\end{equation}
where the product is taken over all positive roots of the system~$R$
and $\mathfrak z\in R\otimes \CC$. 
This is a Jacobi form for the lattice $R^\vee(h)$, where $h=|R|/n$
is the Coxeter number of $R$ (see \cite[\S 2]{G18}).
In particular, the odd Jacobi theta-series $\vartheta(\tau,z)$
(see \eqref{theta}) is the Kac--Weyl denominator function
of $\hat{\mathfrak{g}}(A_1)$.

The possible list of the Lorentzian Kac--Moody 
algebras which are hyperbolizations of the affine Lie algebras is rather 
short. They are 
the affine algebras for $A_1$ (see \cite{GNII}),
$2A_1$, $4A_1$, $A_2$, $3A_2$ and $23$ root systems of the Niemeier 
lattices of rank $24$ (see \cite{GN17}, \cite{G18}, \cite{GSZ}),
$A_4$ (see \cite{GW} and \cite{GW18}). 
The function $\Phi_3^{\Sch}$ gives the case of $A_6$.
We note that the Kac--Weyl--Borcherds denominator function of the 
Lorentzian Kac--Moody algebra for the cases 
$R=A_1$, $2A_1$, $4A_1$, $A_2$, $3A_2$ and $A_4$ is the Gritsenko lifting of the corresponding Kac--Weyl denominator function of 
$\hat{\mathfrak g}(R)$. 
At the end of the paper we consider another function of Scheithauer
which gives a hyperbolization of the affine Lie algebra 
$\hat{\mathfrak g}(A_4\oplus A_4)$. This interpretation gives 
antisymmetric paramodular forms of weight $4$. We are planning to apply 
them to algebraic geometry soon.

\section{Antisymmetric paramodular forms of weight 4}\label{sec:wt4}
According to Scheithauer's work (see \cite[Theorem 10.3]{Sch06}), there is a reflective Borcherds product of singular weight $4$ with respect to  the lattice
\begin{equation*}
U\oplus U(5)\oplus \text{Maass lattice},
\end{equation*}
whose genus is $\II_{10,2}(5^{+6})$. We can check  that
\begin{equation*}
U\oplus U(5)\oplus \text{Maass lattice} \cong 2U\oplus A_4^\vee(5)\oplus A_4^\vee(5).
\end{equation*}
The explicit description and more properties of the lattice $A_4^\vee(5)$ can be found in our last preprint \cite{GW18}.

We can reconstruct the Borcherds product on 
$U\oplus U(5)\oplus (\text{Maass lattice})$ at the 1-dimensional cusp related to $2U\oplus A_4^\vee(5)\oplus A_4^\vee(5)$. 
By Proposition \ref{proplift},  we have
\begin{equation}
\begin{split}
\Psi_{2A_4^\vee(5)}(\tau,\mathfrak{z})=&\Psi_{\Gamma_0(5),\eta^{-4}(\tau)\eta^{-4}(5\tau),0}\\
=&q^{-1}+\sum_{\substack{r\in A_4\oplus A_4\\ (r,r)=2}}e^{2\pi i(r,\mathfrak{z})}+8+O(q) \in J_{0,2A_4^\vee(5),1}^{!,\Orth(2A_4)}.
\end{split}
\end{equation}
Therefore, the function $\Borch(\Psi_{2A_4^\vee(5)})$ is a reflective modular form of weight 4 with respect to $\Orth^+(2U\oplus 2A_4^\vee(-5))$ with divisor
\begin{equation}
\div (\Borch(\Psi_{2A_4^\vee(5)})) = \sum_{\substack{r\in 2U\oplus 2A_4^\vee(5) \\ (r,r)_2=-2}} \cD_r + \sum_{\substack{s\in 2U\oplus \frac{1}{5} 2A_4(-1)\\ (s,s)_2=-\frac{2}{5}}} \cD_s.
\end{equation}
Moreover, the character of $\Borch(\Psi_{2A_4^\vee(5)})$ for the group $\widetilde{\Orth}^+(2U\oplus 2A_4^\vee(-5))$ is $\det$. The first Fourier--Jacobi coefficient of $\Borch(\Psi_{2A_4^\vee(5)})$ is equal to the Kac--Weyl denominator function of the affine Lie algebra $\hat{\mathfrak g}(2A_4)$. Similar to \S \ref{sec:wt3}, we obtain the following theorem.

\begin{theorem}\label{th:wt4}
Given $\mathbf{a}=(a_1,a_2,a_3,a_4)\in\ZZ^4$, $\mathbf{b}=(b_1,b_2,b_3,b_4)\in \ZZ^4$. Let $n_0(\mathbf{a},\mathbf{b})$ be the number of $0$ in the following $20$ integers
\begin{equation}\label{list4}
\begin{split}
&a_1, a_2, a_3, a_4, a_1+a_2, a_2+a_3, a_3+a_4,a_1+a_2+a_3,  \\
&a_2+a_3+a_4,a_1+a_2+a_3+a_4,\\
&b_1, b_2, b_3, b_4, b_1+b_2, b_2+b_3, b_3+b_4,b_1+b_2+b_3,  \\
&b_2+b_3+b_4,b_1+b_2+b_3+b_4.
\end{split}
\end{equation}
Denote by $N(\mathbf{a},\mathbf{b})$ the half of the sum of the squares of the above $20$ integers.
We define a weakly holomorphic Jacobi form in one variable
\begin{equation}
\Psi_{2A_4^\vee(5),\mathbf{a},\mathbf{b}}(\tau,z)=\Psi_{2A_4^\vee(5)}\biggl(\tau, z\sum_{i=1}^4a_iu_i+z\sum_{j=1}^4b_jv_j \biggr),
\end{equation}
where $u_i$ are the fundamental weights of the first copy of $A_4$ and $v_j$ are the fundamental weights of the second copy of $A_4$.
Then $\Borch(\Psi_{2A_4^\vee(5),\mathbf{a},\mathbf{b}})$ is a holomorphic antisymmetric Siegel modular form of weight $4+n_0(\mathbf{a},\mathbf{b})$ with respect to the paramodular group of level $N(\mathbf{a},\mathbf{b})$. Moreover, the first Fourier-Jacobi coefficient of $\Borch(\Psi_{2A_4^\vee(5),\mathbf{a},\mathbf{b}})$ is equal to 
\begin{equation*}
\eta^{3n_0(\mathbf{a},\mathbf{b})-12}\prod_{c}
\vartheta(\tau, cz),
\end{equation*}
where the product runs over all non-zero integers in the list \eqref{list4}.
\end{theorem}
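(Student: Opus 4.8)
The plan is to repeat, almost verbatim, the argument of \S\ref{sec:wt3}, with the reflective form $\Phi_3^{\Sch}$ replaced by the weight-$4$ reflective form $\Borch(\Psi_{2A_4^\vee(5)})$ whose properties were recorded just before the statement: it is \emph{holomorphic}, reflective, has character $\det$ on $\widetilde{\Orth}^+(2U\oplus 2A_4^\vee(-5))$, and is the Borcherds product of the weakly holomorphic Jacobi form $\Psi_{2A_4^\vee(5)}=q^{-1}+\sum_{r\in A_4\oplus A_4,\,(r,r)=2}e^{2\pi i(r,\mathfrak{z})}+8+O(q)$, all of whose singular Fourier coefficients equal $1$. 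First I would restrict $\Psi_{2A_4^\vee(5)}$ to the rank-one sublattice $\ZZ v\subset 2A_4^\vee(5)$ generated by $v=\sum_{i=1}^4 a_iu_i+\sum_{j=1}^4 b_jv_j$. Since $\latt{v,v}=5(v,v)=2N(\mathbf{a},\mathbf{b})$, we have $\ZZ v\cong\latt{2N(\mathbf{a},\mathbf{b})}$, and $\Psi_{2A_4^\vee(5),\mathbf{a},\mathbf{b}}(\tau,z)=\Psi_{2A_4^\vee(5)}(\tau,zv)$ is a weakly holomorphic Jacobi form of weight $0$ and index $1$ for $\latt{2N(\mathbf{a},\mathbf{b})}$, that is, of weight $0$ and index $N(\mathbf{a},\mathbf{b})$ in the sense of Eichler--Zagier, whose singular Fourier coefficients are again $\{0,1\}$-valued, being inherited from $\Psi_{2A_4^\vee(5)}$ by restriction.

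Next I would apply Theorem \ref{th:Borcherds} to $\varphi=\Psi_{2A_4^\vee(5),\mathbf{a},\mathbf{b}}$. The $40$ roots $r$ of $A_4\oplus A_4$ satisfy $(r,v)\in\ZZ$ and, over the $20$ positive roots, take precisely the values listed in \eqref{list4}; hence the $q^0$-term of $\varphi$ equals $\bigl(8+2n_0(\mathbf{a},\mathbf{b})\bigr)+\sum_{c}m_c\,\zeta^c$, where $c$ runs over the non-zero values in \eqref{list4} with multiplicities $m_c$. Thus $f(0,0)=8+2n_0(\mathbf{a},\mathbf{b})$, so $\Borch(\varphi)$ has weight $f(0,0)/2=4+n_0(\mathbf{a},\mathbf{b})$; the sum of all $q^0$-coefficients is $48$, so $A=2$ and $\Borch(\varphi)$ begins with $q^{2}$; and the only term of $\varphi$ with negative $q$-exponent is $q^{-1}\zeta^0$, with coefficient $1$, so $D=\sigma_0(1)=1$ is odd and $\Borch(\varphi)$ is antisymmetric by the last line of Theorem \ref{th:Borcherds}. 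Since $\Borch(\varphi)$ is modular for $\widetilde{\Orth}^+(2U\oplus\latt{-2N(\mathbf{a},\mathbf{b})})$ together with the involution $V$, and this group realises the paramodular group as in \S\ref{sec:6} and in the proof of Theorem \ref{th:wt3}, the function $\Borch(\varphi)$ is an antisymmetric Siegel paramodular form of weight $4+n_0(\mathbf{a},\mathbf{b})$ and level $N(\mathbf{a},\mathbf{b})$.

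For holomorphicity, the multiplicity formula of Theorem \ref{th:Borcherds} gives $\mult\cD_v=\sum_{d>0}f(d^2n,d\ell)\ge 0$ for every rational quadratic divisor, because all singular Fourier coefficients of $\varphi$ are non-negative (alternatively, $\Borch(\varphi)$ is a quasi-pull-back of the holomorphic form $\Borch(\Psi_{2A_4^\vee(5)})$, hence has no poles). For the leading Fourier--Jacobi coefficient, Theorem \ref{th:Borcherds} yields $\eta^{f(0,0)}\prod_{\ell>0}\bigl(\vartheta(\tau,(\ell,\mathfrak{z}))/\eta\bigr)^{f(0,\ell)}$; there are $20-n_0(\mathbf{a},\mathbf{b})$ roots $r$ of $A_4\oplus A_4$ with $(r,v)>0$, so $\sum_{\ell>0}f(0,\ell)=20-n_0(\mathbf{a},\mathbf{b})$, and collecting the $\vartheta$-factors according to the non-zero values $(r,v)$ appearing in \eqref{list4}, and using $\vartheta(\tau,-z)=-\vartheta(\tau,z)$ to normalise signs, rewrites this coefficient as $\eta^{3n_0(\mathbf{a},\mathbf{b})-12}\prod_{c}\vartheta(\tau,cz)$, the product taken over the non-zero integers in \eqref{list4}, up to an overall sign. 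As a by-product, this theta block is then a genuine holomorphic Jacobi form.

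The step I expect to require the most care is the dictionary, used in the second paragraph, between the orthogonal and the paramodular pictures: one must match the Eichler--Zagier index-$N(\mathbf{a},\mathbf{b})$ normalisation of $\varphi$ with the rank-one lattice $\latt{2N(\mathbf{a},\mathbf{b})}$ and its tube model, and check that the character $\chi$ of Theorem \ref{th:Borcherds} — which restricts to $v_\eta^{24A}$ on $\SL_2(\ZZ)$ and to an explicit character of the Heisenberg group — is trivial on $\Gamma_{N(\mathbf{a},\mathbf{b})}$, so that only the sign character $\chi_{N(\mathbf{a},\mathbf{b})}$ survives on $\Gamma^+_{N(\mathbf{a},\mathbf{b})}$; here one uses that $A=2$ is an integer. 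All of this is the same bookkeeping already performed in \S\ref{sec:wt3}, so the weight-$4$ case presents no genuinely new difficulty.
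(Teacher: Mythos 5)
Your proposal is correct and follows essentially the same route as the paper, which proves Theorem \ref{th:wt4} simply by noting it is ``similar to \S\ref{sec:wt3}'': you carry out exactly that analogy (pull back $\Psi_{2A_4^\vee(5)}$ to the rank-one sublattice, read off $f(0,0)=8+2n_0$, $A=2$, $D=1$ from Theorem \ref{th:Borcherds}, and collect the $q^0$-terms into the stated theta block). The only cosmetic imprecision is that the singular coefficients of the pull-back need not be $\{0,1\}$-valued but are non-negative sums of singular coefficients of $\Psi_{2A_4^\vee(5)}$ (by Cauchy--Schwarz on the hyperbolic norms), which is all that holomorphicity requires.
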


We remark that all antisymmetric paramodular cusp forms of weights large than $3$ constructed in \cite{GPY2} can be reconstructed by our method. We list all of them and many new examples in Table \ref{tableanti4}.


\begin{table}[ht]
\caption{Antisymmetric paramodular cusp forms of weights large than 3}
\label{tableanti4}
\renewcommand\arraystretch{1.5}
\noindent\[
\begin{array}{|c|c|c|c|}
\hline 
\text{weight} & N(\mathbf{a},\mathbf{b}) & \mathbf{a}, \mathbf{b} & \text{Theta block} \\ 
\hline 
4 & 62 & (1,1,1,1), (2,1,1,1)& \vartheta^7\vartheta_2^6\vartheta_3^4\vartheta_4^2\vartheta_5/\eta^{12} \\
\hline 
5 & 38 & (1,1,1,1), (-1,1,1,1)& \vartheta^9\vartheta_2^6\vartheta_3^3\vartheta_4/\eta^{9}\\
\hline 
5 & 42 & (1,1,1,1), (0,1,1,1)& \vartheta^8\vartheta_2^6\vartheta_3^4\vartheta_4/\eta^{9} \\
\hline 
5 & 53 & (1,1,1,1), (0,1,1,2)& \vartheta^7\vartheta_2^6\vartheta_3^3\vartheta_4^3/\eta^{9}\\
\hline 
5 & 65 & (2,1,1,1), (0,1,1,2)&\vartheta^6\vartheta_2^6\vartheta_3^3\vartheta_4^3\vartheta_5/\eta^{9} \\
\hline 
6 & 26 & (-1,1,1,1), (-1,1,1,1)& \vartheta^{10}\vartheta_2^6\vartheta_3^2/\eta^6 \\
\hline 
7 & 23 & (-1,1,1,1),(0,1,1,0) &\vartheta^{9}\vartheta_2^7\vartheta_3/\eta^3\\
\hline 
8 & 14 & (1,-1,1,1), (1,-1,1,1)&\vartheta^{12}\vartheta_2^4 \\
\hline 
8 & 17 & (0,1,1,0), (1,-1,1,1)& \vartheta^{10}\vartheta_2^6\\
\hline 
9 & 15 & (0,0,1,1), (1,-1,1,1)& \eta^3\vartheta^{10}\vartheta_2^5 \\
\hline 
\end{array} 
\]
\end{table}

\begin{remark}
We can also consider the pull-back to a lattice of signature $(2,4)$. We define a weakly holomorphic Jacobi form for a lattice of rank $2$
\begin{equation*}
\Psi_{2A_4^\vee(5),\mathbf{a}+\mathbf{b}}(\tau,z_1,z_2)=\Psi_{2A_4^\vee(5)}\biggr(\tau, z_1\sum_{i=1}^4a_iu_i+z_2\sum_{j=1}^4b_jv_j \biggl).
\end{equation*}
Assume that $n_0(\mathbf{a},\mathbf{b})=0$. Denote by $N_0(\mathbf{a})$ the half of the sum of the squares of the first $10$ integers about $\mathbf{a}$ and by $N_0(\mathbf{b})$ the half of the sum of the squares of the last $10$ integers about $\mathbf{b}$. Then the Borcherds product $\Borch(\Psi_{2A_4^\vee(5),\mathbf{a}+\mathbf{b}})$ will give an antisymmetric holomorphic modular form of canonical weight $4$ for the stable orthogonal group of the lattice $2U\oplus \latt{-2N_0(\mathbf{a})}\oplus \latt{-2N_0(\mathbf{b})}$. We hope that this type of modular forms would have applications in Hermitian modular forms and in corresponding modular varieties. It will be interesting to seek a similar test to check the cuspidality of the constructed modular forms as in Proposition \ref{prop:test}. 
\end{remark}
\smallskip

\noindent
\textbf{Acknowledgements.} 
Both  authors are supported by the Laboratory of Mirror Symmetry NRU HSE 
(RF government grant, ag. N 14.641.31.0001).
The second author is also supported by the Labex CEMPI (ANR-11-
LABX-0007-01) in the University of Lille.

\bibliographystyle{amsplain}

\begin{thebibliography}{10}

\bibitem{AGM} 
A. Ash, P. E. Gunnells, M. McConnell,
\textit{Cohomology of congruence subgroups of $SL(4;\ZZ)$. III.} 
Math. Comp. \textbf{79} (2010), 1811--1831.


\bibitem{Bo95} R. E. Borcherds, {\it Automorphic forms on 
$\Orth_{s+2,2}(\RR)$ and infinite products}, Invent. Math. {\bf 120:1} (1995), 161--213.

\bibitem{Bo98} R. E. Borcherds, \textit{Automorphic forms with singularities on Grassmannians.} Invent. Math., \textbf{123} (1998), no. 3, 491--562.

\bibitem{Bou60} N. Bourbaki, {\it Groupes et alg\`{e}bres de Lie.} Chapter 4,5 et 6.

\bibitem{BPY} J. Breeding II, C. Poor, D. S. Yuen,
\textit{Computations of spaces of paramodular
forms of general level.} J. Korean Math. Soc. \textbf{53} (2016),
645--689.

\bibitem{BK}  A. Brumer, K. Kramer, \textit{Paramodular abelian varieties of odd conductor.} Trans. Amer. Math. Soc. \textbf{366} (2014), 2463--2516.

\bibitem{CG} F. Cl\'ery,   V. Gritsenko,
\textit{Modular forms of orthogonal type and Jacobi theta-series.}
Abh. Math. Semin. Univ. Hambg \textbf{83} (2013), 187--217.

\bibitem{EZ} M. Eichler, D. Zagier, \textit{The Theory of Jacobi
Forms.} Progress in Mathematics \textbf{55}. Birkh\"auser, Boston,
Mass., 1985.

\bibitem{F} E. Freitag, \textit{Siegelsche Modulfunktionen.} Grundlehren der mathematischen Wissenschaften 254, Springer-Verlag (1983).

\bibitem{G94} V. Gritsenko, \textit{Irrationality of the moduli spaces of polarized abelian surfaces.} Int. Math. Res. Not. IMRN \textbf{6} (1994), 235--243.

\bibitem{G18} V. Gritsenko, \textit{Reflective modular forms and their applications.} Russian Math. Surveys \textbf{73:5} (2018), 
797--864.

\bibitem{GH98} V. Gritsenko, K. Hulek, \textit{Minimal Siegel modular threefolds.} Math. Proc. Cambridge Philos. Soc. \textbf{123} (1998) 461--485.

\bibitem{GH14}
V. Gritsenko, K. Hulek,
{\it Uniruledness of orthogonal modular varieties.} J. of
Algebraic Geom. \textbf{23} (2014), 711--725.

\bibitem{GHS1}
V. Gritsenko, K. Hulek, G. K. Sankaran,
{\it Abelianisation of orthogonal groups
and the fundamental group of modular varieties}.
J. Algebra {\bf 322:2} (2009), 463--478.

\bibitem{GHS2}
V. Gritsenko, K. Hulek, G. K. Sankaran, 
{\it The Kodaira dimension of the
moduli of K3 surfaces.} Invent. Math. {\bf 169:3} (2007), 519--567.

\bibitem{GN96}
V. Gritsenko, V. V. Nikulin, 
{\it Igusa modular forms and ‘the simplest’
Lorentzian Kac–Moody algebras}, Sb. Math. {\bf 187:11} (1996), 
1601–-1641.

\bibitem{GNII} V. Gritsenko, V. Nikulin, 
\textit{Automorphic forms and Lorentzian Kac--Moody algebras. Part II.}  Internat. J. Math. \textbf{9} (1998), 201--275.

\bibitem{GN17} V. Gritsenko, V. V. Nikulin,
{\it Lorentzian Kac–-Moody algebras with Weyl
groups of $2$-reflections.} 
Proc. Lond. Math. Soc. {\bf 116:3} (2018), 485--533. 

\bibitem{GPY2} V. Gritsenko, C. Poor, D. S. Yuen, \textit{Antisymmetric paramodular forms of weights $2$ and $3$.} Int. Math. Res. Not. IMRN, https://doi.org/10.1093/imrn/rnz011.


\bibitem{GSZ} V. Gritsenko, N. P. Skoruppa, D. Zagier, 
\textit{Theta blocks}, preprint 2018, 56 pp. https://math.univ-lille1.fr/d7/sites/
default/files/THETA$\%$20BLOCKS22.09.18$\underline{\ \ }$1.pdf

\bibitem{GW} V. Gritsenko, H. Wang,
\textit{Conjecture on theta-blocks of order $1$.}
 Russian Math. Surveys \textbf{72:5} (2017), 968--970.
 
\bibitem{GW18} V. Gritsenko, H. Wang,
\textit{Theta block conjecture for paramodular forms of weight $2$.} Preprint 2018, 15 pp, arXiv:1812.08698.


\bibitem{GP}
M. Gross, S. Popescu,
\textit{Calabi--Yau three-folds and moduli of abelian surfaces II.}
Trans. Amer. Math. Soc. \textbf{363} (2011), 3573--3599.


\bibitem{Nik80} V. Nikulin, 
\textit{Integer symmetric bilinear forms and some of their geometric applications.} Math. USSR Izv. \textbf{14} (1980), 103--167. 

\bibitem{PSY} C. Poor, J. Shurman, D. S. Yuen, 
\textit{Siegel paramodular forms of weight $2$ and squarefree level.} Int. 
J. Number Theory \textbf{13} (2017), 2627--2652.

\bibitem{PY} C. Poor, D. S. Yuen,
{\it Paramodular Cusp Forms},
Math. Comp. {\bf 84} (no. 293) (2015), 1401--1438.


\bibitem{Sch06} N. R. Scheithauer, 
\textit{On the classification of automorphic products and generalized Kac-Moody algebras.} Invent. Math. \textbf{164} (2006), 641--678.

\bibitem {Sch09} N. R. Scheithauer, 
\textit{The Weil representation of $\SL_2(\ZZ)$ and some applications.} Int. Math. Res. Not. IMRN (2009) no.8, 1488--1545.

\bibitem{Sch15} N. R. Scheithauer, 
\textit{Some constructions of modular forms for the Weil representation of $\SL(2,\ZZ)$.} Nagoya Math. J. \textbf{220} (2015), 1--43.

\bibitem{Sch17} N. R. Scheithauer,
\textit{Automorphic products of singular weight.}  Compos. Math. \textbf{153} (2017), 1855--1892. 

\end{thebibliography}

\end{document}